\newcommand{\co}{\colon\,}
\newcommand{\bR}{\mathbb R}
\newcommand{\bC}{\mathbb C}
\newcommand{\bK}{\mathbb K}
\newcommand{\bL}{\mathbb L}
\newcommand{\bZ}{\mathbb Z}
\newcommand{\bP}{\mathbb P}
\newcommand{\bN}{\mathbb N}
\newcommand{\bQ}{\mathbb Q}
\newcommand{\wcA}{\widehat{\mathcal A}}
\newcommand{\cE}{\mathcal E}
\newcommand{\cF}{\mathcal F}
\newcommand{\cL}{\mathcal L}
\newcommand{\cN}{\mathcal N}
\newcommand{\cO}{\mathcal O}
\newcommand{\cS}{\mathcal S}
\newcommand{\cT}{\mathcal T}
\newcommand{\wM}{\widetilde M}
\newcommand{\pt}{\text{pt}}
\newcommand{\lp}{\textup{(}}
\newcommand{\rp}{\textup{)}}
\newcommand{\Wh}{\operatorname{Wh}}
\newcommand{\sign}{\operatorname{sign}}
\newcommand{\inde}{\operatorname{ind}}
\newcommand{\id}{\text{id}}
\theoremstyle{plain}
\newtheorem{theorem}{Theorem}[section]
\newtheorem{proposition}[theorem]{Proposition}
\newtheorem{conj}[theorem]{Conjecture}
\theoremstyle{definition}
\newtheorem{definition}[theorem]{Definition}
\newtheorem{problem}[theorem]{Problem}
\newlength{\boxwidth}
\begin{document}

\title{Novikov's Conjecture}
\author{Jonathan Rosenberg}
\address{Department of Mathematics,
  University of Maryland, College Park, MD 20742-4015, USA}
\email{jmr@math.umd.edu}
\thanks{Work on this paper was partially supported by the United States National
Science Foundation, grant number 1206159. I would like to thank Andrew
Ranicki and Shmuel Weinberger for useful feedback on an earlier draft
of this paper.}
\begin{abstract}
  {We describe Novikov's ``higher signature conjecture,'' which
dates back to the late 1960's, as well as many alternative
formulations and related problems.  The Novikov Conjecture is perhaps
the most important unsolved problem in high-dimensional manifold
topology, but more importantly, variants and analogues permeate many
other areas of mathematics, from geometry to operator algebras to
representation theory.}
\end{abstract}
\subjclass[2010]{Primary 57R67; Secondary 19J25, 19K56, 19G24, 19D50, 58J22}
\maketitle

\section{Origins of the Original Conjecture}
\label{sec:origins}

The Novikov Conjecture is perhaps the most important unsolved problem
in the topology of high-dimensional manifolds.  It was first stated by
Sergei Novikov, in various forms, in his lectures at the International
Congresses of Mathematicians in Moscow in 1966 and in Nice in 1970,
and in a few other papers
\cite{MR0231401,MR0431215,MR0268907,MR0292913}. For an annotated
version of the original formulation, in both Russian and English, we
refer the reader to \cite{MR1388295}.  Here we will try instead to put
the problem in context and explain why it might be of interest to the
average mathematician.  For a nice book-length exposition of this
subject, we recommend \cite{MR2117411}.  Many treatments of various
aspects of the problem can also be found in the many papers in
the collections \cite{MR1388294,MR1388306}.

For the typical mathematician, the most important topological spaces
are smooth manifolds, which were introduced by Riemann in the 1850's.
However, it took about 100 years for the tools for classifying
manifolds (except in dimension $1$, which is trivial, and dimension
$2$, which is relatively easy) to be developed.  The problem is that
manifolds have no local invariants (except for the dimension); all
manifolds of the same dimension look the same \emph{locally}. Certainly many
different manifolds were known, but how can one tell whether or not
the known examples are ``typical''?  How can one distinguish one
manifold from another?

With big leaps forward in topology in the 1950's, it finally became
possible to answer these questions, at least in part.  Here were a few
critical ingredients:
\begin{enumerate}
  \item the development of the theory of Reidemeister and Whitehead
    torsion and the related notion of ``simple homotopy equivalence''
    (see \cite{MR0196736} for a good survey of all of this);
  \item the theory of characteristic classes of vector bundles,
    developed by Chern, Weil, Pontrjagin, and others;
  \item the notion of cobordism, introduced by Thom \cite{MR0061823},
    who also provided a method for computing it;
  \item the Hirzebruch signature theorem $\sign(M) = \langle \cL(M),
    [M]\rangle$ \cite{MR0368023}, giving a
    formula for the signature of an oriented closed manifold $M^{4k}$ (this is
    the algebraic signature of the nondegenerate symmetric bilinear
    form $(x,y)\mapsto \langle x\cup y, [M]\rangle$ on
    $H^{2k}$ coming from Poincar\'e duality), in terms of a certain
    polynomial $\cL(M)$ in the rational Pontrjagin classes of the
    tangent bundle.  
\end{enumerate}

Using just these ingredients, Milnor \cite{MR0082103} was able to
show that there are at least $7$ different diffeomorphism classes
of $7$-manifolds homotopy equivalent to $S^7$.  (Actually there are
$28$ diffeomorphism classes of such manifolds, as Milnor and Kervaire
\cite{MR0148075} showed a bit later.) This and the major role played by
items 2 and 4 on the above list\footnote{Spheres have stably trivial
tangent bundle and no interesting cohomology, so one's first guess
might be that the theory of vector bundles and the signature theorem
might be irrelevant to studying homotopy spheres.  Milnor, however,
showed that one can construct lots of manifolds with the homotopy type
of a $7$-sphere as unit sphere bundles in rank-$4$ vector bundles over
$S^4$.  He also showed that the signature of an $8$-manifold 
bounded by such a manifold yields lots of information
about the homotopy sphere.} came as a big surprise, and
showed that the classification of manifolds, even within a
``standard'' homotopy type, has to be a hard problem.

The final two ingredients came just a bit later.  One was Smale's
famous \emph{$h$-cobordism theorem}, which was the main ingredient in his
proof \cite{MR0137124} of the high-dimensional Poincar\'e conjecture
in the topological category.  (In other words, if $M^n$ is a smooth
compact $n$-manifold, $n\ge 5$, homotopy equivalent to $S^n$, then $M$
is homeomorphic to $S^n$, even though it may not be diffeomorphic to
it.) But from the point of view of the general manifold classification
program, Smale's important contribution was a criterion for telling
when two manifolds really are diffeomorphic to one another. An
$h$-cobordism between compact manifolds $M$ and $M'$ is a compact
manifold with boundary $W$, such that $\partial W = M\sqcup M'$ and
such that $W$ has deformation retractions down to both $M$ and
$M'$. The $h$-cobordism theorem \cite{MR0190942}
says that if $\dim M=\dim M' \ge 5$
and if $M$, $M'$, and $W$ are simply connected, then $W$ is
diffeomorphic to $M\times [0,1]$, and in particular, $M$ and $M'$ are
diffeomorphic. The advantage of this is that diffeomorphisms between
different manifolds are usually very hard to construct directly; it is
much easier to construct an $h$-cobordism.

If one dispenses with simple connectivity, then an $h$-cobordism
between $M$ and $M'$ need not be diffeomorphic to a product $M\times
[0,1]$. However, the $s$-cobordism theorem, due to Barden, Mazur, and
Stallings, with simplifications due to Kervaire \cite{MR0189048},
says that the $h$-cobordisms themselves are classifiable by the
Whitehead torsion $\tau(W, M)$, which takes values in the Whitehead
group $\Wh(\pi)$, where $\pi=\pi_1(M)$, and all values in $\Wh(\pi)$
can be realized by $h$-cobordisms. (The Whitehead group is the
quotient of the algebraic $K$-group $K_1(\bZ\pi)$ by its ``obvious''
subgroup $\{\pm1\}\times \pi_{\text{ab}}$.)  Thus an $h$-cobordism is a product if
$\Wh(\pi) = 0$, which is the case for $\pi$ free abelian, and in fact
is conjectured to be the case if $\pi$ is torsion-free.  But for $\pi$
finite, for example,
$\Wh(\pi)$ is a finitely generated group of rank $r-q$, where
$r$ is the number of irreducible real representations of $\pi$, and
$q$ is the number of irreducible rational representations of $\pi$
\cite[Theorem 6.2]{MR0196736}.  This number $r-q$ is usually positive
(for example, when $\pi$ is finite cyclic, it vanishes only if $|\pi|=
1,2, 3, 4, \text{ or }6$).  Bass and Murthy have even shown
\cite{MR0219592} that there are finitely generated abelian groups
$\pi$ for which $\Wh(\pi)$ is not finitely generated.
 
The last major ingredient for the classification of manifolds is the
method of \emph{surgery}.  Surgery on an $n$-manifold $M^n$ means cutting
out a neighborhood $S^k\times D^{n-k}$ of a $k$-sphere
$S^k\hookrightarrow M$ (with trivial normal bundle)
and replacing it by $D^{k+1}\times S^{n-k-1}$,
which has the same boundary.  This can be used to modify a manifold
without changing its bordism class, and was first introduced by Milnor
\cite{MR0130696} and Wallace \cite{MR0125588}.

With the help of all of these techniques, Browder
\cite{MR0326743,MR0358813} and Novikov \cite{MR0158405,MR0162246}
finally introduced a general methodology for classifying manifolds in high
dimensions. The method gave complete results for simply connected
manifolds in dimensions $\ge 5$,
and only partial information in dimensions $3$ and $4$, which have
their own peculiarities we won't discuss here.  With the help of
additional contributions by Sullivan \cite{MR1434103},
Novikov  \cite{MR0292913}, 
and above all, Wall \cite{MR0212827}, this method
grew into what we know today as \emph{surgery theory}, codified by
Wall in his book \cite{MR1687388}, which originally appeared in 1970.
There are now fairly good expositions of the theory, for example in
Ranicki's books \cite{MR1211640,MR2061749}, in the book by Kreck and
L\"uck \cite{MR2117411}, in the first half of Weinberger's book
\cite{MR1308714}, and in Browder's colloquium lectures from 1977
\cite{MR1747530}, so we won't attempt to compete by going into
details, which anyway would take far too many pages.  Instead we will
just outline enough of the ideas to set the stage for Novikov's conjecture.

As we indicated before, surgery theory addresses the uniqueness question for
manifolds: given (closed and connected, say) manifolds $M$ and $M'$ of the same
dimension $n$, when are they diffeomorphic (or homeomorphic)?  It also
addresses an existence question: given a connected topological space $X$ (say a
finite CW complex), when is it homotopy equivalent to a (closed) manifold?

A few necessary conditions are evident from a first course in topology.
If $M$ and $M'$ are diffeomorphic, then certainly they are homotopy
equivalent, and so they have the same fundamental group $\pi$.
Furthermore, if a finite connected CW complex $X$ has the homotopy
type of a closed manifold, then it has to satisfy Poincar\'e duality,
even in the strong sense of (possibly twisted) Poincar\'e duality
of the universal cover with coefficients in $\bZ\pi$.  Homotopy
equivalences preserve homology and cohomology groups and cup products,
so an orientation-preserving homotopy equivalence also preserves the
signature (in dimensions divisible by $4$ when the signature is defined).
However, these conditions are not
nearly enough.  For one thing, for a homotopy equivalence to be
homotopic to a diffeomorphism (or even a homeomorphism), it has to
be \emph{simple}, i.e., to
have vanishing torsion in $\Wh(\pi)$.  Depending on the fundamental
group $\pi$, this may or may not be a serious restriction.

But the most serious conditions involve characteristic classes of the
tangent bundle.  Via a very ingenious argument using surgery theory
and the Hirzebruch signature theorem, Novikov
\cite{MR0193644,MR0196764} showed that the rational Pontrjagin 
classes of the tangent bundle of a manifold are preserved under
homeomorphisms.\footnote{The same does not hold for the torsion part
of the Pontrjagin classes, as one can see from calculations with lens
spaces \cite[\S3]{MR0268907}.} 
(Incidentally, Gromov \cite[\S7]{MR1389019} has given a totally 
different short argument for this.) The rational Pontrjagin classes do
\emph{not} 
have to be preserved under homotopy equivalences. So if $\varphi\co
M\to M'$ is a homotopy equivalence not preserving rational Pontrjagin
classes, it cannot be homotopic to a homeomorphism.

In the simply connected case, this is (modulo finite ambiguity)
just about all: if $M'\to M$ is an orientation-preserving homotopy
equivalence of closed (oriented) simply connected oriented manifolds, 
the rational Pontrjagin classes of $M'$ have to satisfy the
constraint $\langle \cL(M'), [M']\rangle = \sign(M')=\sign(M)$
imposed by the Hirzebruch signature theorem, but otherwise
they are effectively unconstrained (assuming the dimension of the
manifold is at least $5$).\footnote{A precise statement to this effect
may be found in \cite[Theorem 6.5]{MR1747536}.  It says for example
that if $M$ is  closed simply connected manifold and $\dim M$ is not
divisible by $4$, then for \emph{any} set of elements $x_j\in H^{4j}(M,\bQ)$,
$1\le j\le \left\lfloor \frac{\dim M}{4}\right\rfloor$, there is a
positive integer $R$ such that for any integer $m$, there is
a homotopy equivalence of manifolds $\varphi_m\co M'_m\to M$ such
that $p_j(M'_m) = \varphi_m^*\bigl(p_j(M)+ m\,R\,x_j\bigr)$.}  
And if the map does preserve rational
Pontrjagin classes, then there are only finitely many possibilities
for $M'$ up to diffeomorphism.

When $M$ is not simply connected, the situation is appreciably more
complicated. Suppose one wants to check if two $n$-manifolds $M$ and $M'$
are diffeomorphic.  As we indicated before, that means we need to have
a simple homotopy equivalence $\varphi\co M' \to M$. If $\varphi$ were
homotopic to a diffeomorphism, it would preserve the classes of the
tangent bundles, so it's convenient to assume that $\varphi$ has been
promoted to a \emph{normal map} $\varphi\co (M', \nu')\to (M,
\nu)$. Here $\nu$ and $\nu'$ are the stable normal bundles defined via
the Whitney embedding theorem: if $k$ is large enough ($n + 1$
suffices), then $M$ and $M'$ have embeddings into Euclidean space
$\bR^{n+k}$, and any two such embeddings are isotopic, so the 
isomorphism class of the normal bundle $\nu$ or $\nu'$ for such an
embedding is well defined. (Because of the Thom-Pontrjagin
construction, it's better to work with the normal bundle than with the
tangent bundle, but they contain the same information.)  Being a
normal map means that $\varphi$ has been extended to a bundle map from
$\nu'$ to $\nu$, which we can assume is an isomorphism of bundles.
The idea of trying to show that $M$ and $M'$ are diffeomorphic is to
start with a \emph{normal bordism} from $\varphi$ to $\id_M$, i.e., a
manifold $W^{n+1}$ with boundary $M\sqcup M'$ and a map $\Phi\co W\to
M\times [0,1]$ restricting to $\varphi$ and to $\id_M$ on the two boundary
components, and with  a compatible map of bundles, and then to try to
modify $(W,\Phi)$ by surgery to make it into an $s$-cobordism.  Once this is
accomplished, then $M$ and $M'$ are diffeomorphic by the $s$-cobordism
theorem. It turns out that doing the surgery is not difficult until
one gets up to the middle dimension (if $n+1$ is even) or the ``almost
middle'' dimension $\left\lfloor \frac{n+1}{2} \right\rfloor$ (if
$n+1$ is odd).  At this point a \emph{surgery obstruction} appears,
taking its value in a group $L_{n+1}(\bZ\pi)$ constructed purely
algebraically out of quadratic forms on $\bZ\pi$. 
(Roughly speaking, the $L$-groups are groups of stable equivalence
classes of forms on finitely generated projective or free $\bZ\pi$-modules,
and the type of the form --- symmetric,
skew-symmetric, etc. --- depends only on the value of $n$ mod $4$.
The original construction may be found in \cite{MR1687388}.)
The existence problem (telling if one can find a manifold
homotopy equivalent to a given finite complex with Poincar\'e duality)
works in a very similar way, just down in dimension by $1$, and the
surgery obstruction in that case takes its values in $L_{n}(\bZ\pi)$.

Ultimately, the result of this surgery process is to prove that there
is a \emph{surgery exact sequence} for computation of the
\emph{structure set} $\cS(M)$, the set of (simple) homotopy
equivalences $\varphi\co M'\to M$, where $M'$ is a smooth compact manifold,
modulo equivalence.  We say that two such maps $\varphi\co M'\to M$
and $\varphi'\co M''\to M$ are equivalent if there is a commuting
diagram
\[
\xymatrix{M' \ar[rr]^\varphi \ar[rd]_\cong&& M\\
& M'' \ar[ru]^{\varphi'}&.}
\]
The surgery exact sequence then takes the form
\begin{equation}
\label{eq:surgseq}
\xymatrix{\cdots \ar[r]^(.4)\alpha &
L_{n+1}(\bZ\pi)\ar@{.>}[r] & \cS(M) \ar[r]^\eta &\cN(M) 
\ar[r]^\alpha &L_{n}(\bZ\pi)}.
\end{equation}
Here $\cN(M)$ is the set of \emph{normal invariants}, the normal
bordism classes of all normal maps $\varphi\co (M',\nu')\to (M,\nu)$
(not necessarily homotopy equivalences as before)
modulo linear automorphisms of $\nu$. This can also be identified with
homotopy classes of maps from $M$ into a classifying space called
$G/O$.  If one works instead in the PL or the topological
category, the same sequence
\eqref{eq:surgseq} is valid, but $G/O$ is replaced by $G/PL$ or
$G/\mathit{Top}$, which are 
easier to deal with\footnote{once the dimension is bigger than $4$!},
and in fact look a lot like $BO$, the 
classifying space for real $K$-theory.  The natural maps $G/O\to
G/PL\to  G/\mathit{Top}$
are rational homotopy equivalences.  The map $\eta\co \cS(M) \to
\cN(M) $ sends a homotopy equivalence $\varphi\co M'\to M$ to the
associated normal data.

The groups $L_\bullet(\bZ\pi)$
are $4$-periodic, and only depend on the fundamental group and some
``decorations'' which we are suppressing here, which only affect
the torsion.  The map $\eta\co \cN(M) \to L_n(\bZ\pi)$ takes the
bordism class of a normal map $\varphi\co (M',\nu')\to (M,\nu)$
to its associated \emph{surgery obstruction}. When this vanishes,
exactness  of \eqref{eq:surgseq} says we can lift $\varphi$ to an
element of $\cS(M)$, or in other words, we can do surgery to convert
it to a homotopy equivalence.  The dotted arrow from $L_{n+1}(\bZ\pi)$
to $\cS(M)$ signifies that the surgery group operates on $\cS(M)$
(which is just a pointed set, not a group) and that two elements of
the structure set have the same normal invariant if and only if they
lie in the same orbit for the action of $L_{n+1}(\bZ\pi)$.

The exact sequence \eqref{eq:surgseq} is closely related to an
\emph{algebraic surgery exact sequence}
\begin{equation}
\label{eq:algsurgseq}
\cdots \to L_{n+1}(\bZ\pi) \to \cS_n(M) \to H_n(M, \bL(\bZ))
\xrightarrow{A} L_n(\bZ\pi) 
\end{equation}
constructed in \cite{MR561227,MR1211640}, where the map $A$, called
the \emph{assembly map}, corresponds to local-to-global passage. We
will come back to this later.

For most groups $\pi$, the $L$-groups $L_\bullet(\bZ\pi)$ are not easy
to calculate, so a lot of the literature on surgery theory emphasizes
things related to the exact sequence \eqref{eq:surgseq} which don't rely on
explicit calculation of all the groups.  For example, sometimes one
can compare two related surgery problems, or rely on other invariants,
such as $\eta$- and $\rho$-invariants for finite groups.  These (as
well as direct calculation from \eqref{eq:surgseq}) show that there
are infinitely many manifolds with the homotopy type of
$\bR\bP^{4k+3}$, $k\ge 1$. In fact, it's shown in \cite{MR1988288}
that in dimension ${4k+3}$, $k\ge 1$, any closed manifold $M$ with
torsion in its fundamental group has infinitely many distinct
manifolds simple homotopy-equivalent to it.

Now we are ready to explain Novikov's conjecture.  For $M$ an oriented
closed manifold, we can rewrite the Hirzebruch signature theorem as
saying that for a closed connected oriented manifold $M$, the $0$-degree
component of $\cL(M)\cap [M]$ in $H_0(M,\bQ)\cong \bQ$ coincides with
$\sign M$, which is preserved by orientation-preserving homotopy
equivalences.  The components of $\cL(M)\cap [M]$ in other degrees
have no such invariance property, and knowing them is equivalent to
knowing the rational Pontrjagin classes.  However, Novikov
discovered in \cite{MR0196764} (see \cite[Theorem 2.1 and its
proof]{MR1747536} for a simplified version of his argument) that
if $\pi_1(M)\cong \bZ$, then the degree-$1$ component of $\cL(M)\cap
[M]$ is also an oriented homotopy invariant.  This theorem is the
simplest special case of Novikov's conjecture. 
\begin{definition}
\label{def:highersign}
Let $M$ be a closed connected oriented manifold $M$, and let $\pi$ be
a countable discrete group {\lp}usually taken to be the fundamental group of
$M${\rp}. Let $B\pi$ be a classifying space for $\pi$, a CW complex
with contractible universal cover and fundamental group $\pi$, and let
$f\co M \to B\pi$ be a continuous map. {\lp}Up to homotopy, it's
determined by the induced homomorphism $\pi_1(M)\to \pi$.{\rp} The
associated \emph{higher signature} of $M$ is $f_*(\cL(M)\cap
[M])\in H_\bullet(B\pi, \bQ)$.
\end{definition}
\begin{boxedminipage}{\boxwidth}
\begin{conj}[Novikov's Conjecture]
\label{conj:Nov}
Any higher signature   $f_*(\cL(M)\cap [M])\in H_\bullet(B\pi, \bQ)$
is always an oriented homotopy invariant.  In other words, if
$M$ and $M'$ are closed connected oriented manifolds and if
$\varphi\co M'\to M$ is an orientation-preserving homotopy
equivalence and $f\co M\to B\pi$, then
\[
f_*(\cL(M)\cap [M]) =
(f\circ\varphi)_* (\cL(M')\cap [M'])\in H_\bullet(B\pi, \bQ).
\]
\end{conj}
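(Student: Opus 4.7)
The plan is to reformulate Conjecture \ref{conj:Nov} as the rational injectivity of the assembly map $A\co H_*(B\pi,\bL(\bZ))\otimes\bQ \to L_*(\bZ\pi)\otimes\bQ$ from \eqref{eq:algsurgseq}, and then attack that injectivity by passing to operator K-theory. This reduction converts a statement about homotopy invariance into a purely algebraic statement about the group $\pi$, which is the form in which almost all progress on the problem has been made.

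First I would show that each higher signature $f_*(\cL(M)\cap[M])$ is the image, under an explicit rational equivalence, of a class that is manifestly homotopy invariant. To an oriented Poincar\'e complex $M$ with reference map $f\co M\to B\pi$ one associates a symmetric signature $\sigma^*(M,f)\in L_n(\bZ\pi)$, constructed from the Poincar\'e duality chain equivalence on the $\pi$-cover; by construction it is unchanged under oriented homotopy equivalences over $B\pi$. Ranicki's algebraic surgery theory identifies $\sigma^*(M,f)=A(f_*[M]_\bL)$, where $[M]_\bL\in H_n(M,\bL(\bZ))$ is a canonical $\bL$-theory fundamental class whose image under the rational splitting $H_*(M,\bL(\bZ))\otimes\bQ\cong\bigoplus_k H_{*-4k}(M,\bQ)$ is exactly $\cL(M)\cap[M]$. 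Thus rational injectivity of $A$ would force $f_*(\cL(M)\cap[M])$ to be determined by $\sigma^*(M,f)$ and hence to be an oriented homotopy invariant.

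Second, I would attempt the rational injectivity of $A$ by factoring through the topological K-theory of $C^*_r\pi$. The natural transformation from the $L$-spectrum of $\bZ\pi$ to the K-spectrum of $C^*_r\pi$ converts $A$ rationally into the Baum--Connes assembly map $\mu\co K^{\mathrm{top}}_*(B\pi)\to K_*(C^*_r\pi)$, and a rational left inverse for $\mu$ yields one for $A$. To construct such a left inverse I would use Kasparov's KK-theoretic dual-Dirac/$\gamma$-element method whenever $\pi$ acts properly and isometrically on a suitable non-positively curved space (CAT$(0)$, hyperbolic, or bolic); the Dirac/dual-Dirac argument of Higson--Kasparov when $\pi$ is a-T-menable; or a Farrell--Jones-style controlled-topology descent directly in $L$-theory when $\pi$ is hyperbolic or acts properly and cocompactly on a CAT$(0)$-space.

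The principal obstacle is that this is the open problem itself: no single technique is known to handle all countable discrete groups $\pi$, and the argument necessarily splits into cases determined by the geometry available on $E\pi$. Groups with Kazhdan's property (T) that admit no useful action on a non-positively curved space remain the hardest, and fundamentally new input appears to be needed there. Even in the cases that are known, executing the first step rigorously requires careful bookkeeping of decorations on the $L$-groups, the rational splitting of $\bL(\bZ)$, and the compatibility between ordinary and $\bL(\bZ)$-homology Poincar\'e duality, so I would expect the bulk of the work to lie in making the reduction uniform and in extracting rational injectivity of $\mu$ for the particular class of fundamental groups at hand.
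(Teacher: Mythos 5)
This statement is an open conjecture, not a theorem; the paper contains no proof of it, and neither can you supply one. You correctly recognize this in your final paragraph, which is the most important thing to say. Your outline of a \emph{strategy} does faithfully track what the survey itself describes: the reduction of Conjecture~\ref{conj:Nov} to rational injectivity of the assembly map $A_\pi$ in \eqref{eq:assemb} (equivalently, the map $A$ in \eqref{eq:algsurgseq}) is exactly the reformulation given at the end of Section~\ref{sec:methods}, via Ranicki's identification of the symmetric signature $\sigma^*(M,f)$ as an oriented-homotopy invariant landing in $L_n(\bZ\pi)$ together with the fact that the $\bL(\bZ)$-homology fundamental class maps rationally to $\cL(M)\cap[M]$. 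Your second step --- factoring rationally through $K_\bullet(C^*_r\pi)$ and invoking Kasparov's $\gamma$-element/dual-Dirac method or Higson--Kasparov for a-T-menable groups --- is precisely the content of Proposition~\ref{prop:BCgivesNC} and the surrounding discussion: Baum--Connes (or even rational injectivity of $\mu$) implies Novikov.

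The gap, as you yourself flag, is that none of these techniques is known to apply to \emph{every} countable discrete group $\pi$. The dual-Dirac and descent arguments require geometric input (a proper action on a non-positively curved or bolic space, finite asymptotic dimension or decomposition complexity, a-T-menability, hyperbolicity, etc.), and there are groups --- property (T) groups with no useful proper actions, groups built by exotic small-cancellation or random constructions, Gromov monster groups --- where all known machines stall. There is also a genuine obstacle in the transition from $L_\bullet(\bZ\pi)$ to $K_\bullet(C^*_r\pi)$: that map need not be injective, so injectivity of $\mu$ gives injectivity of $A_\pi$ only via the diagram chase in \eqref{eq:assemblychase} or \eqref{eq:assemblychase1}, which requires the commuting square to be set up carefully (and, for groups with torsion, requires the spectral-sequence argument showing the lower-left map is injective). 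None of this can be ``made uniform'' with current technology, which is why Conjecture~\ref{conj:Nov} remains open. Your write-up would be an accurate summary of the state of the art, but it should be presented as such and not as a proof sketch.
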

\end{boxedminipage}

The utility of the conjecture can be illustrated by an example.
\begin{problem}
\label{prob:CPxS}
Classify smooth compact $5$-manifolds homotopy equivalent to
$\bC\bP^2\times S^1$. {\lp}Note: the diffeomorphism classification of smooth
$4$-manifolds homotopy equivalent to $\bC\bP^2$ is not known, since
surgery breaks down in the smooth category in dimension $4$. It is
known by work of Freedman \cite{MR679066}
that up to \emph{homeomorphism}, there are exactly
two closed topological $4$-manifolds homotopy equivalent
to $\bC\bP^2$, but for the ``exotic'' one, the product with $S^1$ does
not have a smooth structure.{\rp}
\end{problem}
\begin{proof}
Suppose $M$ is a smooth closed manifold of the homotopy type of
$\bC\bP^2\times S^1$. There is a smooth map $f\co M\to S^1$ inducing an
isomorphism on $\pi_1$, and we can take this to be the map $f\co M\to
B\pi$, $\pi=\bZ$, for the case of the conjecture proven by Novikov
himself. So the conjecture implies that if $K=f^{-1}(\pt)$, the
inverse image of a regular value of $f$, then $K$ has signature $1$.
This fixes the first Pontrjagin class of $M$.  Furthermore, $K$ being
a smooth $4$-manifold with signature $1$, it is in the same oriented
bordism class as $\bC\bP^2$. From this we can get a normal bordism $W^6$
between $M$ (with its stable normal bundle $\nu$) and $\bC\bP^2\times
S^1$ (with its stable normal bundle $\xi$). We plug into the surgery
machine and try to do surgery to convert this to an $h$-cobordism
(and thus automatically an $s$-cobordism, since $\Wh(\bZ)=0$). The
surgery obstruction lives in $L_6(\bZ[\bZ])$.  This group turns out to
be $\bZ/2$ (coming from the image of the Arf invariant in
$L_6(\bZ)\cong \bZ/2$).  So there are not a 
lot of possibilities.  In fact one can show by studying the
continuation of the sequence \eqref{eq:surgseq} to the left that $M$
is diffeomorphic to $\bC\bP^2\times S^1$.  But note that the key
ingredient in the whole argument is the Novikov Conjecture, which pins
down the first Pontrjagin class.\qed
\end{proof}  

\section{Methods of Proof}
\label{sec:methods}

Work on the Novikov Conjecture began almost as soon as the conjecture
was formulated.  Roughly speaking, methods fall into three different
categories: topological, analytic, and algebraic.  The
\emph{topological} approach began with Novikov's own work on the free
abelian case of the conjecture, which we already mentioned in the
case $\pi=\bZ$, and which only uses transversality and basic homology
theory.  This method was generalized in work of Kasparov, Farrell-Hsiang,
and Cappell \cite{MR0287565,MR0385867,MR0438349}, who used
codimension-one splitting methods to deal with free abelian and
poly-$\bZ$ groups, and certain kinds of amalgamated free products.

Subsequent topological approaches to the conjecture have been
based on \emph{controlled topology} (if you like, a blend of analysis
and topology since it amounts to topology with $\delta$-$\varepsilon$
estimates) or on various methods in stable homotopy theory.
There is a lot more in this area than we can possibly summarize here,
but it is discussed in detail in \cite{MR1388295}, which includes a
long bibliography.

The \emph{analytic} approach began with the important contribution of
Lusztig \cite{MR0322889}.  The key idea here is to realize the higher
signature of Definition \ref{def:highersign} as the index of a family
of elliptic operators, just as Atiyah and Singer \cite[\S6]{MR0236952} had
reproven Hirzebruch's signature theorem by realizing the signature as
the index of a certain elliptic operator, now universally called the
signature operator.  (This is just the operator $d+d^*$ operating on
differential forms, but with a grading on the forms coming from the
Hodge $*$-operator.)  A major step forward from the work of Lusztig
came with the work of Mishchenko \cite{MR0362407,MR561226}
and Kasparov \cite{MR1388299,MR918241,MR1216182}, who realized that one
could generalize this construction by using ``noncommutative''
families of elliptic operators, based on a $C^*$-algebra completion
$C^*(\pi)$ of the algebraic group ring $\bC\pi$.  
Underlying this method was the idea \cite{MR561226,MR1388305} that 
because of the inclusions $\bZ\pi\hookrightarrow \bC\pi
\hookrightarrow C^*(\pi)$, there is a natural map $L_n(\bZ\pi)\to
L_n(C^*(\pi))$, and that because the spectral theorem enables one to
diagonalize quadratic forms over a $C^*$-algebra, the $L$-groups and
topological $K$-groups of a $C^*$-algebra essentially coincide.  As we
will see in the next section, the analytic approach to the Novikov
conjecture is the one that has attracted the most recent attention,
though there is still plenty of work being done on topological and
algebraic methods. 

Algebraic approaches to proving the Novikov conjecture depend on a
finer understanding of the surgery exact sequence \eqref{eq:surgseq}
and the $L$-groups.  For a homotopy equivalence of manifolds
$\varphi\co M'\to M$, the difference $\varphi_*(\cL(M')\cap [M']) -
(\cL(M)\cap [M]) \in H_\bullet(M,\bQ)$ is basically $\eta([M'\to
  M])\otimes_{\bZ} \bQ$ in \eqref{eq:surgseq}.  The Novikov conjecture
says that this should vanish when we apply $f_*$, $f\co M\to B\pi$.
Since we could also apply \eqref{eq:surgseq} with $M$ replaced by
$B\pi$ (at least if $B\pi$ can be chosen to be a manifold --- but there is
a way of getting around this), exactness in \eqref{eq:surgseq} shows
that the Novikov Conjecture is equivalent to rational injectivity of
the map $\alpha$ in \eqref{eq:surgseq}, when we replace $M$ by $B\pi$.

More precisely, we need to make use an idea of Quinn \cite{MR0282375},
that the $L$-groups are the homotopy groups of a spectrum:
\[
L_n(\bZ\pi) = \pi_n(\bL_\bullet(\bZ\pi))
\]
and that the map $\alpha$ in the surgery exact sequence
\eqref{eq:surgseq} comes from an \emph{assembly map} which is the
induced map on homotopy groups of a map of spectra
\[
A_M\co M_+ \wedge \bL_\bullet(\bZ) \to \bL_\bullet(\bZ\pi).
\]
This map factors (via $f\co M\to B\pi$) through a similar map
\begin{equation}
\label{eq:assemb}
A_\pi\co B\pi_+ \wedge \bL_\bullet(\bZ) \to \bL_\bullet(\bZ\pi).
\end{equation}
If $A_\pi$ in \eqref{eq:assemb} induces a rational injection on
homotopy groups, then 
the Novikov Conjecture follows from exactness of \eqref{eq:surgseq}.
On the other hand, if $A_\pi$ is not rationally injective, then one
can construct an $M$ and a higher signature for it that is not
homotopy invariant.  So the Novikov Conjecture is reduced to a
statement which at least in principle is purely algebraic, as Ranicki
in \cite{MR561227,MR1211640} gives a purely algebraic construction of the
surgery spectra and of the map $A_\pi$, leading to the exact sequence
\eqref{eq:algsurgseq}.\footnote{It turns out that
\eqref{eq:algsurgseq} coincides with the analogue of
\eqref{eq:surgseq} in the topological, rather than smooth, category,
but the difference between these is rather small since all homotopy
groups of $\mathit{Top}/O$ are torsion.} 

\section{Variations on a Theme}
\label{sec:variations}
One of the most interesting features of the Novikov Conjecture is that
it is closely related to a number of other useful conjectures.  Some 
of these are known to be true, some are known to be false, and most
are also unsolved.  But even the ones that are false are false for
somewhat subtle reasons, and still carry some ``element of truth.''
Here we mention a number of these related conjectures and something
about their status.

\begin{boxedminipage}{\boxwidth}
\begin{conj}[Borel's Conjecture]
\label{conj:Borel} 
Any two closed aspherical {\lp}i.e., having contractible universal
covers{\rp} manifolds $M$ and $M'$ with the same fundamental group
are homeomorphic.  In fact, any
homotopy equivalence $\varphi\co M'\to M$ of such manifolds is
homotopic to a homeomorphism.
\end{conj}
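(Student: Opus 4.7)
The plan is to reduce the Borel Conjecture to algebraic/dynamical statements about the fundamental group $\pi = \pi_1(M)$, by pushing the surgery exact sequence \eqref{eq:surgseq} as far as it will go. Since $M$ is aspherical, the classifying map $M \to B\pi$ is a homotopy equivalence, so up to homotopy we may take $M = B\pi$; in particular the assembly map $A_M$ coincides with $A_\pi$ from \eqref{eq:assemb}. The conclusion we want --- that every homotopy equivalence $\varphi\co M' \to M$ is homotopic to a homeomorphism --- is precisely the statement that the topological structure set $\cS(M)$ has only one element.

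First I would dispose of the torsion issues. Invoking the $s$-cobordism theorem requires $\Wh(\pi) = 0$, so step one is the $K$-theoretic form of the Farrell--Jones isomorphism conjecture for $\pi$: the assembly map $B\pi_+ \wedge K(\bZ) \to K(\bZ\pi)$, evaluated on the family of virtually cyclic subgroups, should be an equivalence. Granted this, every homotopy equivalence is automatically simple and any relevant $h$-cobordism is a product.

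Next I would combine \eqref{eq:algsurgseq} with \eqref{eq:surgseq}. Triviality of $\cS(M)$ is equivalent to saying that the $L$-theoretic assembly map $A_\pi$ of \eqref{eq:assemb} is an equivalence on homotopy groups in degree $n$ (to kill $\cN(M)/L_n$) and surjective in degree $n+1$ (to kill the $L_{n+1}$-action on $\cS(M)$). Note that mere rational injectivity would give only the Novikov Conjecture~\ref{conj:Nov}; for Borel we need an integral isomorphism, so we cannot simply invoke an analytic lower bound on $A_\pi$ --- we must invert it.

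The main obstacle, and the reason Borel remains open in general, is proving this integral isomorphism for the Farrell--Jones assembly maps. No purely algebraic argument is known; every successful strategy exploits a geometric or dynamical model for $B\pi$. The approach I would follow is to first place $\pi$ in a geometric class (hyperbolic, CAT(0), or admitting a suitable action on a finite-dimensional $E_{\mathcal{VC}}\pi$), and then run the controlled topology, $\alpha$-approximation, transfer-reducibility, or flow-space machinery of Farrell--Hsiang, Farrell--Jones, and Bartels--L\"uck--Reich to verify the isomorphism conjecture for this $\pi$. Feeding the resulting $K$- and $L$-theory isomorphisms into \eqref{eq:surgseq} collapses $\cS(M)$ to a point, forcing $\varphi$ to be homotopic to a homeomorphism. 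For an arbitrary aspherical closed manifold the requisite dynamical input is not yet available --- that gap is exactly where the difficulty lies, and it is the reason one must honestly label Conjecture~\ref{conj:Borel} as a conjecture rather than a theorem.
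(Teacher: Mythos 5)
Conjecture~\ref{conj:Borel} is exactly that --- a conjecture, not a theorem --- and the paper offers no proof of it. You correctly say as much in your closing sentence, so there is no gap in the logical sense: what you have written is a reduction, not a proof, and you know it. More to the point, your reduction is precisely the one the paper itself makes in the paragraph immediately following the statement: take $M=B\pi$ with $\dim M = n\ge 5$, work in the topological surgery sequence \eqref{eq:surgseq}, and observe that Borel's Conjecture is equivalent to the assembly map $A_\pi$ of \eqref{eq:assemb} being an equivalence, hence strictly stronger than Novikov's Conjecture. You refine this slightly by separating the $K$-theoretic ingredient ($\Wh(\pi)=0$, so that $h$-cobordisms are products and homotopy equivalences are automatically simple) from the $L$-theoretic one, and by naming the Farrell--Hsiang, Farrell--Jones, and Bartels--L\"uck--Reich controlled-topology and flow-space methods that underlie the known cases; all of this is accurate and consistent with the paper's later discussion of the Farrell--Jones Conjecture~\ref{conj:FJ}.

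Two small corrections worth making. First, you should state the dimension restriction $n\ge 5$ explicitly: the surgery-theoretic reduction simply does not apply in low dimensions, where Borel is known by entirely different means (trivially for $n\le 2$, by geometrization for $n=3$, and by Freedman's work for many $n=4$ cases). Second, for a single $M$ of dimension $n$, triviality of $\cS(M)$ requires only injectivity of $A_\pi$ in degree $n$ and surjectivity in degree $n+1$ (by exactness of \eqref{eq:algsurgseq}); ``equivalence in degree $n$'' is more than is needed, though of course for the conjecture in all dimensions at once one does want $A_\pi$ to be a full equivalence of spectra.
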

\end{boxedminipage}

This conjecture is known to have been posed informally by Armand
Borel, before the formulation of Novikov's Conjecture, and was
motivated by the Mostow Rigidity Theorem. It amounts to a kind
of topological rigidity for aspherical manifolds.  Note that if $M$ is
aspherical with fundamental group $\pi$ and $n=\dim M\ge 5$,
then we can take $M=B\pi$, and Borel's conjecture amounts to saying
that in the surgery sequence \eqref{eq:surgseq} in the topological
category, $\cS(M)$ is just a single point, or by exactness, the
assembly map $A_\pi$ is an equivalence.  This implies the Novikov
Conjecture for $\pi$, but is stronger.

Incidentally, it is known now that the analogue of Borel's Conjecture,
but with homeomorphism replaced by diffeomorphism, is false. The
simplest counterexample is with $M=T^7$, the $7$-torus.  Since a torus
is parallelizable, Wall pointed out in \cite[\S 15A]{MR1687388}
that the set of smooth structures on $T^n$ compatible
with the standard PL structure is parameterized by $[T^n, PL/O]$ (for
$n\ge5$). It is known that the classifying space $PL/O$ is
$6$-connected and that (for $j\ge 7$) its $j$-th homotopy 
group can be identified with the group $\Theta_j$ of smooth
homotopy $j$-spheres.\footnote{The group operation is the connected
  sum; inversion comes from reversing the orientation.} Since
$\Theta_7\cong \bZ/28$ by \cite{MR0082103,MR0148075}, 
the differentiable structures on $T^7$ are parameterized by
$[T^7, PL/O]\cong [T^7, K(\Theta_7, 7)] \cong
H^7(T^7, \Theta_7)\cong \bZ/28$ and there are $28$ different
differentiable structures on $T^7$. A series of counterexamples 
with negative curvature to the smooth Borel conjecture was constructed in
\cite{MR1002632,MR1203985}.

The fundamental group $\pi$ of an aspherical manifold $M$ (even if
noncompact) has to be
torsion-free, since if $g\in\pi$ has finite order $k>1$, it would act freely
on the universal cover $\wM$, and $\wM/\langle g\rangle$ would be a
finite-dimensional model for $B\bZ/k$, contradicting the fact that
$\bZ/k$ has homology in all positive odd dimensions. So Conjecture
\ref{conj:Borel} can't apply to groups with torsion. In fact, the
result of \cite{MR1988288} shows that for groups with torsion, 
$A_\pi$ in \eqref{eq:assemb}
is never an equivalence. We will come back to this shortly.

However, we have already mentioned the role of the Whitehead group,
which comes from the algebraic $K$-theory of $\bZ\pi$, in studying
manifolds with fundamental group $\pi$.  An important conjecture which
we have already mentioned is:

\begin{boxedminipage}{\boxwidth}
\begin{conj}[Vanishing of Whitehead Groups] 
\label{conj:Wh}
If $\pi$ is torsion-free, then $\Wh(\pi)=0$.
\end{conj}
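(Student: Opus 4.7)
The plan is to reduce Conjecture~\ref{conj:Wh} to a $K$-theoretic assembly isomorphism parallel to the $L$-theory story of Section~\ref{sec:methods}. In analogy with \eqref{eq:assemb}, the relevant tool is the $K$-theoretic assembly map
\[
A^K_\pi \co B\pi_+ \wedge \bK(\bZ) \to \bK(\bZ\pi),
\]
and the goal is to prove that for torsion-free $\pi$ this map is an equivalence in low degrees, i.e.\ the torsion-free case of the $K$-theoretic Farrell--Jones conjecture. Granting that, Conjecture~\ref{conj:Wh} is immediate.

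First I would check that surjectivity of $A^K_\pi$ on $\pi_1$ already implies $\Wh(\pi)=0$. By naturality (applied to the inclusion of the trivial group), the image of $A^K_\pi$ on $\pi_1$ is generated by the image of $K_1(\bZ)=\{\pm 1\}$ and of $H_1(B\pi; K_0(\bZ))=\pi_{\mathrm{ab}}$. A direct generator chase identifies these with the unit $-1\in \bZ\pi^\times$ and with the classes of group elements $g\in\pi\subset \bZ\pi^\times$, so the image sits inside the ``obvious'' subgroup $\{\pm 1\}\times\pi_{\mathrm{ab}}\subset K_1(\bZ\pi)$. Since $\Wh(\pi)$ is defined as the quotient by exactly this subgroup, surjectivity of $A^K_\pi$ on $\pi_1$ forces $\Wh(\pi)=0$.

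Second, I would attack the assembly isomorphism itself. Following the geometric strategy of Farrell--Hsiang, Farrell--Jones, and Bartels--L\"uck--Reich, one works on $E\pi$ or on a suitable $\pi$-equivariant flow space/compactification when $\pi$ carries a nonpositively curved geometry, introduces a ``controlled'' $K$-theory that splits off $A^K_\pi$ at finer and finer scales, and concludes via a transfer/descent argument. Torsion-freeness enters decisively: the native Farrell--Jones family assembly is over virtually cyclic subgroups, but in a torsion-free group every virtually cyclic subgroup is either trivial or infinite cyclic, and $\Wh(\bZ)=0$ contributes nothing extra beyond the trivial family, so the family assembly collapses to $A^K_\pi$.

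The principal obstacle is exactly this second step: proving surjectivity of $A^K_\pi$ on $\pi_1$ integrally for a general torsion-free group. This is a major open problem and is currently known only when one has usable geometric or dynamical input --- Gromov hyperbolic groups, CAT$(0)$ groups, lattices in virtually connected Lie groups, mapping class groups, and a handful of related classes. A fully general proof of Conjecture~\ref{conj:Wh} is therefore not presently available; any new approach would have to handle groups for which no controlled compactification or flow space is known, and would have to overcome the fact that $K_1(\bZ\pi)$ admits no easy algebraic presentation when $\pi$ is infinite.
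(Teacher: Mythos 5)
This statement is labeled as a conjecture in the paper, and the paper offers no proof of it --- it only records (immediately after Conjecture~\ref{conj:KNov}) that the assembly map $B\pi_+\wedge\bK(\bZ)\to\bK(\bZ\pi)$ being an equivalence in degrees $\le 1$ for torsion-free $\pi$ would imply Conjecture~\ref{conj:Wh}, and notes that this in turn is a special case of the Farrell--Jones Conjecture~\ref{conj:FJ}. Your proposal does not prove the conjecture either (nor could it, since it is open), but it correctly identifies and fleshes out precisely this reduction: surjectivity of $A^K_\pi$ on $\pi_1$ forces $\Wh(\pi)=0$ because the image is the ``obvious'' subgroup $\{\pm 1\}\times\pi_{\mathrm{ab}}$, the torsion-free hypothesis collapses the virtually-cyclic family to the trivial one (using $\Wh(\bZ)=0$), and the remaining content is the $K$-theoretic Farrell--Jones problem, which is known only for the geometric classes you list. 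So your account matches the paper's treatment and is appropriately honest about the current state of the problem; there is nothing to correct.
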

\end{boxedminipage} 

Note that if Conjecture \ref{conj:Wh} fails and $\pi$ is the
fundamental group of a closed manifold $M$, then by the $s$-cobordism
theorem, there is an $h$-cobordism $W$ with $\partial W = M \sqcup
(-M')$ which is not a product,
and we have a homotopy equivalence $M'\to M$ which is not simple,
hence Borel's Conjecture, Conjecture \ref{conj:Borel}, fails for $M$.

More generally, one can ask what one can say about the algebraic
$K$-theory of $\bZ\pi$ in all degrees.  Loday \cite{MR0447373}
constructed an assembly map $B\pi_+\wedge \bK(\bZ)\to \bK(\bZ\pi)$,
and this being an equivalence would say that all of the algebraic
$K$-theory of $\bZ\pi$ comes in some sense from homology of $\pi$ and
$K$-theory of $\bZ$.  This is known in some cases --- for $\pi$ free
abelian, it follows from the ``Fundamental Theorem of $K$-theory.''
The assembly map being an equivalence in degrees $\le 1$ for
torsion-free groups $\pi$ and $R=\bZ$ implies Conjecture
\ref{conj:Wh}. The analogue of Novikov's Conjecture for $K$-theory is

\begin{boxedminipage}{\boxwidth}
\begin{conj}[Novikov Conjecture for $K$-Theory]
\label{conj:KNov}
Let $R=\bZ,\bQ,\bR,\text{ or }\bC$ and let $\pi$ be a discrete group.
Then the assembly map $B\pi_+\wedge \bK(R)\to \bK(R\pi)$ induces
an injection of rational homotopy groups.
\end{conj}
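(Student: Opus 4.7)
The overall strategy is to mimic the analytic approach to the $L$-theoretic Novikov conjecture, but since algebraic $K$-theory of $R\pi$ is sensitive to torsion information invisible to $C^*$-algebras, one cannot simply copy the Kasparov--Mishchenko argument. I would treat the case $R=\bZ$ as the main one, since for $R = \bQ, \bR, \bC$ the additional flatness and characteristic-zero tools make things no harder (and in fact one can often reduce from a field of characteristic zero down to $\bZ$ by localization arguments, or handle $\bC$ separately via $C^*$-methods as a cross-check).

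For $R=\bZ$, the plan is to use the \emph{cyclotomic trace} of B\"okstedt--Hsiang--Madsen,
\[
\text{tr}\co \bK(\bZ\pi) \to TC(\bZ\pi),
\]
which lands in topological cyclic homology and is known to be rationally very close to $K$-theory on connective covers of nice rings. First I would study the trace-theoretic assembly map $B\pi_+ \wedge TC(\bZ) \to TC(\bZ\pi)$ and show that it is rationally split injective. The key technical input here is a computation of $THH(\bZ\pi)$ in terms of the free loop space of $B\pi$, together with an $S^1$-equivariant analysis and the Connes--Tsygan type decomposition; this is where the assumption that the integral homology of $B\pi$ is of finite type enters crucially in B\"okstedt--Hsiang--Madsen's version. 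Second, I would check that the square
\[
\xymatrix{
B\pi_+ \wedge \bK(\bZ) \ar[r] \ar[d] & \bK(\bZ\pi) \ar[d]^{\text{tr}} \\
B\pi_+ \wedge TC(\bZ) \ar[r] & TC(\bZ\pi)
}
\]
detects rational injectivity of the top row, using that the left vertical map is rationally injective (via rational computations of $K(\bZ)$ and $TC(\bZ)$) and that the Dundas--Goodwillie--McCarthy theorem controls the fiber of $\text{tr}$ on nilpotent extensions.

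As an independent check in the case $R=\bC$, I would factor the assembly map through topological $K$-theory of the reduced group $C^*$-algebra,
\[
B\pi_+ \wedge \bK(\bC) \;\to\; \bK(\bC\pi) \;\to\; K^{\text{top}}(C^*_r\pi),
\]
and appeal to the Strong Novikov Conjecture (injectivity of the Baum--Connes assembly), which is known for a large class of groups including those coarsely embeddable in Hilbert space and those satisfying the Haagerup property. Rational injectivity of the outer composite together with mild control over the kernel of $\bK(\bC\pi)\to K^{\text{top}}(C^*_r\pi)\otimes\bQ$ would then give the $\bC$-case.

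The main obstacle, and the reason Conjecture \ref{conj:KNov} remains open, is that every known technique requires either a finite-type hypothesis on $H_\bullet(B\pi,\bZ)$ (for the trace approach) or a specific geometric input such as a coarse embedding, $\text{CAT}(0)$ structure, or hyperbolicity (for the $C^*$-algebraic approach). For an arbitrary countable discrete group one has neither, and the delicate interplay between the transfer along finite subgroups and the global assembly seems to demand a genuinely new idea that bridges the trace-theoretic and analytic methods.
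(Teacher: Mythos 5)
Conjecture \ref{conj:KNov} is stated in the paper as an open conjecture, with no proof offered---the surrounding text merely surveys partial results. You correctly recognized this, and your account of the partial results is accurate and lines up with what the paper cites: the B\"okstedt--Hsiang--Madsen cyclotomic trace argument under a finite-type hypothesis on $H_\bullet(B\pi;\bZ)$ is precisely the result the paper attributes for the $R=\bZ$ case, and your explanation of why the argument needs that hypothesis (the $THH$/free-loop-space computation and the $S^1$-equivariant analysis) is the standard one. The paper additionally mentions integral, not merely rational, injectivity results of Carlsson--Pedersen under an equivariant compactification hypothesis on $E\pi$, and a more recent extension to groups with a finite $B\pi$ and finite decomposition complexity; these play the role of the geometric hypotheses (coarse embeddability, $\mathrm{CAT}(0)$, hyperbolicity) you list for the analytic route, though the precise conditions differ and are not the ones actually used in the algebraic $K$-theory literature the paper cites. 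Your closing caveat about the $\bC$-case is also well placed: factoring through $K^{\text{top}}(C^*_r\pi)$ only controls the outer composite, and there is no general handle on the kernel of $\bK(\bC\pi)\otimes\bQ\to K^{\text{top}}(C^*_r\pi)\otimes\bQ$, which is exactly why the analytic methods that prove the Strong Novikov Conjecture do not by themselves settle the algebraic $K$-theory version. There is no gap to flag, because there is no proof to check; your survey of the landscape and of where the obstacles lie is essentially the same as the paper's.
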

\end{boxedminipage} 

Conjecture \ref{conj:KNov} was proved (with $R=\bZ$, the most
important case) for groups $\pi$ with finitely
generated homology in \cite{MR1202133}.  It was also proved (without
rationalizing) in \cite{MR1388308}, when $\pi$ is a discrete, cocompact,
torsion-free discrete subgroup of a connected Lie group.
Subsequently, Carlsson and Pedersen \cite{MR1341817}
proved it (without rationalizing) for any group $\pi$ for which there
is a finite model for $B\pi$, such that the universal cover $E\pi$ of
$B\pi$ admits a contractible metrizable $\pi$-equivariant
compactification $X$ such that compact subsets of $E\pi$ become small
near the ``boundary'' $X\smallsetminus E\pi$.  This was recently
improved \cite{MR3259041} to the case where there
is a finite model for $B\pi$ and $\pi$ has finite decomposition
complexity, which is a tameness condition on $\pi$ viewed as a metric
space with the word length metric (for some finite generating set).

As we have already mentioned, for groups with torsion, 
the assembly map $A_\pi$ of \eqref{eq:assemb} is never an
equivalence. For similar reasons, one also can't expect the $K$-theory
assembly map to be an equivalence for groups with torsion. The correct
replacement seems to be the following.\footnote{Just for
the experts: one needs to use the $-\infty$ decoration on the
$L$-spectra here.}

\begin{boxedminipage}{\boxwidth}
\begin{conj}[Farrell-Jones Conjecture]
\label{conj:FJ}
Let $\pi$ be a discrete group and let $\cF$ be its family of virtually
cyclic subgroups {\lp}subgroups that contain a cyclic subgroup of
finite index{\rp}.  Such subgroups are either finite or else admit a
surjection with finite kernel onto either $\bZ$ or the infinite
dihedral group $(\bZ/2)*(\bZ/2)$. Let $E_{\cF}(\pi)$ denote the
universal $\pi$-space with isotropy in $\cF$. This is a
contractible $\pi$-CW-complex $X$ with all isotropy groups in $\cF$
{\lp}for the $\pi$-action{\rp} and with $X^H$ contractible for each $H\in 
\cF$. It is known to be uniquely defined up to $\pi$-homotopy
equivalence.  Then the assembly maps
\begin{equation}
\label{eq:FJassemb}
H_\bullet^\pi(E_{\cF}(\pi); \bL(\bZ))\to \bL(\bZ\pi) \quad
\text{and}\quad
H_\bullet^\pi(E_{\cF}(\pi); \bK(R))\to \bK(R\pi)
\end{equation}
are isomorphisms for $R=\bZ,\bQ,\bR,\text{ or }\bC$.
\end{conj}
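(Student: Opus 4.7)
The plan is to work in the Davis--L\"uck framework, in which both left hand sides of \eqref{eq:FJassemb} arise as the values at $E_{\cF}(\pi)$ of $\pi$-equivariant homology theories built from the $K$- and $L$-theory spectra, with values on an orbit $\pi/H$ equal to $\bK(RH)$ and $\bL(\bZ H)$ respectively. In this formulation the assembly maps are induced by the projection $E_{\cF}(\pi)\to \pt$, and the conjecture becomes the assertion that these are equivalences. A first formal step is to reduce from $E_{\cF}(\pi)$ toward $E\pi$: once the conjecture is granted on each virtually cyclic $H\in\cF$ (a separate but much more tractable input, understood via Bass--Heller--Swan type fundamental theorems and Waldhausen's splittings), everything comes down to a descent principle comparing the $\cF$-universal space with the point.

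For that descent I would pursue the controlled topology approach of Farrell--Jones, as reformulated by Bartels--L\"uck--Reich. The goal is to exhibit the action of $\pi$ on a suitable metric model of $E_{\cF}(\pi)$ as \emph{transfer reducible}: one produces equivariant ``long thin'' open covers of controlled covering dimension whose members become arbitrarily small in a coarse sense on each orbit, and couples this with an expanding flow. The resulting transfer from $\bK(R\pi)$ into the equivariant homology of the cover, composed with the assembly map, recovers the identity and so forces split injectivity. Split surjectivity then follows from an equivariant Mayer--Vietoris and induction argument over the cells of the cover. The outcome is that the two assembly maps of \eqref{eq:FJassemb} are equivalences whenever $\pi$ admits such a transfer-reducible action.

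The main obstacle is geometric, and I expect this is where the plan must stop. The above strategy succeeds when $\pi$ carries a rich non-positively curved structure --- CAT$(0)$ groups, Gromov hyperbolic groups, lattices in virtually connected Lie groups, mapping class groups, and fundamental groups of $3$-manifolds --- but there is no known construction of the required long thin cover and expanding flow for an arbitrary countable discrete group. For a general $\pi$ one does not even know a finite-dimensional model for $E_{\cF}(\pi)$, let alone a dynamically meaningful one. Producing a uniform geometric or algebraic substitute, perhaps via descent along quotients with manageable kernels or along a universal action on a combinatorial complex such as a building, is the essential unresolved problem. Consequently Conjecture \ref{conj:FJ} remains open in full generality, although the program sketched here has been carried through for many large and natural classes of groups.
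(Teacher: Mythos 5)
You correctly recognized that this is a \emph{conjecture}, not a theorem, and the paper accordingly offers no proof; it merely states the conjecture and cites partial results. Your response does the right thing: it declines to manufacture a proof and instead surveys the state of the art. Your description of the Davis--L\"uck equivariant-homology formulation, the reduction to virtually cyclic subgroups via Bass--Heller--Swan and Waldhausen-type splittings, and the Bartels--L\"uck--Reich controlled-topology/transfer-reducibility program is accurate and consistent with the references the paper cites for this circle of ideas. The classes of groups you list as settled (hyperbolic groups, CAT$(0)$ groups, lattices in Lie groups, certain groups acting on trees) match the paper's own enumeration of known cases, and your diagnosis that the missing ingredient is a geometric model with the requisite flow/covering structure for arbitrary groups is exactly where the field currently stands.

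One small point worth flagging: your opening sentence says the reduction goes ``from $E_{\cF}(\pi)$ toward $E\pi$,'' but for groups with torsion this is the wrong direction; the conjecture is precisely that $E_{\cF}(\pi)$ (with virtually cyclic isotropy) is the correct domain, and the paper emphasizes that replacing it by $E\pi$ would \emph{fail} in the presence of torsion, by the result of \cite{MR1988288} cited just before Conjecture~\ref{conj:FJ}. The reduction one actually does in the Farrell--Jones program is from the family of virtually cyclic subgroups down to the family of hyperelementary or finite subgroups (inside the algebra), not from $E_{\cF}(\pi)$ to $E\pi$. With that caveat, your account is a fair summary of why the conjecture remains open.
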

\end{boxedminipage} 

When $\pi$ is torsion-free, \eqref{eq:FJassemb} is just the assembly
map \eqref{eq:assemb} or its $K$-theory version, and the conjecture
says that the assembly map is an equivalence. Conjecture \ref{conj:FJ}
implies Conjectures \ref{conj:Borel}, \ref{conj:Nov}, and \ref{conj:KNov},
even for groups with torsion, as well as Conjecture \ref{conj:Wh}.
More details on Conjecture
\ref{conj:FJ} may be found in \cite{MR2827832}, in
\cite[Ch.\ 19--24]{MR2117411}, or in \cite{MR2181833}.
The $K$-theory version of the conjecture has been proven in
\cite{MR2138135} for fundamental groups of manifolds of negative
curvature and in \cite{MR2385666} for hyperbolic groups, and both the
$K$-theory and $L$-theory versions have been proven for
certain groups acting on trees in \cite{MR2210223,MR2954664} and for
cocompact lattice subgroups of Lie groups in \cite{MR3164984}.
Rational injectivity of \eqref{eq:FJassemb} holds under much weaker
conditions; see for example \cite{MR3178242}.

Another variation on the Novikov Conjecture is to consider the
situation where a finite group $G$ acts on a manifold, and one wants
to study $G$-equivariant invariants of $M$. Under suitable circumstances,
one finds that the fundamental group of $M$ leads to a certain extra amount
of equivariant topological rigidity.  To formulate the analogue of
Conjecture \ref{conj:Nov}, one needs a substitute for the homology
$L$-class $\cL(M)\cap [M]$.  The easiest way to formulate this is in
$K$-homology, since Kasparov \cite{MR0488027,MR582160}, following ideas
of Atiyah and Singer, showed that an elliptic differential operator $D$ on
$M$ naturally leads to a $K$-homology class $[D]\in K_\bullet(M)$
(see also \cite{MR1817560} for an exposition), and when $D$ is
$G$-invariant, the class naturally lives in  $K_\bullet^G(M)$. The
image of $[D]$ in $K_\bullet^G(\pt)=R(G)$ under the map induced by $M\to \pt$
is the equivariant index $\inde_G D\in R(G)$ in the sense of  Atiyah
and Singer. When $D$ is the signature operator, $\cL(M)\cap [M]$ is
basically (except for some powers of $2$, not important here) the
Chern character of $[D]\in K_\bullet(M)$, and so if $f\co M\to B\pi$,
the higher signature of Definition \ref{def:highersign}, is basically
the Chern character of $f_*([D])$. That motivates the following.

\begin{boxedminipage}{\boxwidth}
\begin{conj}[Equivariant Novikov Conjecture \cite{MR1076524}]
\label{conj:equivNov}
Let $M$ be a closed oriented manifold admitting an action of a finite
group $G$, and suppose $f\co M\to X$ is a $G$-equivariant smooth map
to a finite $G$-CW complex which is $G$-equivariantly 
aspherical {\lp}i.e., $X^H$ is aspherical for all subgroups $H$ of
$G${\rp}. Let $\varphi\co M'\to M$ be a $G$-equivariant map of closed
$G$-manifolds which, non-equivariantly, is a homotopy equivalence.
Then if $[D_M]$ and $[D_{M'}]$ denote the equivariant $K$-homology
classes of the signature operators on $M$ and $M'$, respectively,
\[
f_*([D_M])= (f\circ \varphi)_*([D_{M'}]) \in K_\bullet^G(X).
\]
\end{conj}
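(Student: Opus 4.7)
\emph{Proof proposal.} The strategy I would take is to combine Kasparov's equivariant $KK$-theory with localization at conjugacy classes of $G$, reducing the equivariant identity to a family of pushforward identities on fixed-point submanifolds, each of which falls within the scope of the classical Novikov Conjecture.

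First, using the delocalized equivariant Chern character (or, equivalently, the Atiyah--Segal localization theorem for the finite group $G$) one obtains a rational splitting
\[
K_\bullet^G(X)\otimes \bQ \;\cong\; \bigoplus_{(g)} K_\bullet(X^g)^{C_G(g)}\otimes \bQ,
\]
so it suffices to check agreement of the $(g)$-components for each conjugacy class $(g)$ in $G$. By the Atiyah--Singer $G$-index theorem in its $K$-homology form, the image of $[D_M]$ in $K_\bullet(M^g)^{C_G(g)}$ is a class $\alpha_g(M)$ built from the signature operator of the fixed-point submanifold $M^g$ twisted by a characteristic class factor determined by the linearized $g$-action on the normal bundle $N_{M^g/M}$; an analogous recipe produces $\alpha_g(M')\in K_\bullet((M')^g)^{C_G(g)}$.

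Since $f$ is $G$-equivariant, it restricts to $f|_{M^g}\co M^g\to X^g$, and similarly to $(f\varphi)|_{(M')^g}\co (M')^g\to X^g$; the hypothesis that $X^g$ is aspherical identifies it with $B\pi_g$, where $\pi_g := \pi_1(X^g)$. The problem is thereby reduced to proving
\[
(f|_{M^g})_*\,\alpha_g(M) \;=\; \bigl((f\varphi)|_{(M')^g}\bigr)_*\,\alpha_g(M') \quad\text{in}\ K_\bullet(B\pi_g)\otimes\bQ,
\]
$C_G(g)$-equivariantly. This is a twisted higher-signature identity for the fixed-set manifolds, of exactly the shape addressed by Conjecture~\ref{conj:Nov}.

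The main obstacle is that $\varphi$ is only a \emph{non-equivariant} homotopy equivalence, so $\varphi^g\co (M')^g\to M^g$ need not be a homotopy equivalence, and one cannot directly apply Conjecture~\ref{conj:Nov} to it. To surmount this I would argue as follows. By equivariant transversality one can isotope $\varphi$ through $G$-maps so that it is $G$-transverse to the stratification of $M$ by isotropy type; then each $\varphi^g$ is upgraded to a \emph{normal map} whose bordism-theoretic invariants are controlled by the global non-equivariant homotopy equivalence together with the $G$-action on the normal data. Combining this with an equivariant Poincar\'e duality argument relating the fundamental classes $[M^g]$ and $[(M')^g]$, the difference of the two sides above is expressed as a classical higher-signature defect for $\varphi^g\co (M')^g\to M^g$ with coefficients in a flat bundle over $B\pi_g$ coming from the normal data, which vanishes by (the twisted version of) Conjecture~\ref{conj:Nov} applied to $\pi_g$. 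Since the argument in effect invokes the classical Novikov Conjecture for the family $\{\pi_1(X^g)\}_{g\in G}$, Conjecture~\ref{conj:equivNov} is of comparable strength to Conjecture~\ref{conj:Nov}, and indeed specializes to it when $G=\{e\}$.
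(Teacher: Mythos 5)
This statement is a \emph{conjecture} (labeled Conjecture~\ref{conj:equivNov} in the paper), not a theorem, and the paper offers no proof of it.  The paper only records that it has been \emph{verified} in two special cases---$X$ a closed manifold of nonpositive curvature \cite{MR1076524} and $X$ a Euclidean building \cite{MR1433118}, in both cases with $G$ acting by isometries---and those proofs proceed by Kasparov-style ``Dirac / dual Dirac'' arguments in equivariant $KK$-theory, establishing split injectivity of an equivariant assembly map, not by the localization-to-fixed-sets reduction you sketch.  So there is no ``paper's own proof'' to compare against: you have attempted to prove an open conjecture, and what you have produced is at best a conditional heuristic.

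Beyond that mismatch, your sketch has a genuine gap, which you flag honestly but do not actually close.  After localizing at a conjugacy class $(g)$ and passing to fixed sets, the map $\varphi^g\co (M')^g\to M^g$ need not be a homotopy equivalence---only the ambient non-equivariant map $\varphi$ is.  Your remedy is to promote $\varphi^g$ to a normal map by equivariant transversality and then invoke a ``twisted version'' of Conjecture~\ref{conj:Nov}.  But Conjecture~\ref{conj:Nov} is a statement about homotopy equivalences; for a mere normal map the pushed-forward $L$-class (equivalently the $K$-homology class of the signature operator) is \emph{not} preserved, and the discrepancy is exactly the image of the surgery obstruction in $L_\bullet(\bZ\pi_g)$, which has no reason to vanish.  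So the reduction does not land in territory covered by the classical conjecture at all.  Compounding this, the Atiyah--Segal/Atiyah--Singer localization of $[D_M]$ at $g$ is not the bare signature class of $M^g$ but is twisted by characteristic-class factors coming from the action of $g$ on the normal bundle of $M^g$ in $M$, and a non-equivariant homotopy equivalence gives no control over how $\varphi^g$ relates those twisting factors on $(M')^g$ and $M^g$ (indeed the fixed sets themselves can change homotopy type wildly).  Finally, even if all of these gaps were closed, your argument would at most show that Conjecture~\ref{conj:equivNov} follows from Conjecture~\ref{conj:Nov} for the groups $\pi_1(X^g)$; since the latter is itself open, this still would not be a proof, and your closing claim that the two conjectures are ``of comparable strength'' overstates what the reduction would establish---the equivariant statement, with its weak hypothesis that $\varphi$ be only a non-equivariant homotopy equivalence, is a genuinely stronger rigidity assertion.
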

\end{boxedminipage}

Various generalizations and applications to rigidity theorems
are possible (see for example
\cite{MR951234,MR982331}), but we won't go into details 
here.  Conjecture \ref{conj:FJ} was proven in \cite{MR1076524} for 
$X$ is a closed manifold of nonpositive curvature and in
\cite{MR1433118} for $X$ a Euclidean building, in both cases with $G$
acting by isometries.

\section{New Directions}
\label{sec:directions}

The conjectures we discussed in Section \ref{sec:variations} are
fairly directly linked to the original Novikov Conjecture, and it is
easy to see how they are connected with topological rigidity of highly
connected manifolds.  But in this section, we will discuss a number of
other conjectures which grew out of work on Novikov's Conjecture but
which go somewhat further afield, to the point where the connection
with the original conjecture may not be immediately obvious.  However,
we will try to explain the relationships as we go along.

We have already mentioned the assembly map and the Farrell-Jones
Conjecture (Conjecture \ref{conj:FJ}), which gives a conjectural
calculation of the $L$-groups $L_\bullet(\bZ\pi)$ for a discrete group
$\pi$. However, work on Novikov's Conjecture by analytic techniques
(see Section \ref{sec:methods}) already required passing from the integral
group ring to the complex group ring (this only affects $2$-torsion in
the $L$-groups) and then completing $\bC\pi$ to a $C^*$-algebra.
For $C^*$-algebras, $L$-theory is basically the same as topological
$K$-theory, and even for real $C^*$-algebras, they agree after
inverting $2$ \cite[Theorem 1.11]{MR1388305}.  So it's natural to ask
if assembly can be used to compute the topological $K$-theory of
$C^*(\pi)$. For the full group $C^*$-algebra this seems to be
impossible, but for the \emph{reduced} group $C^*$-algebra 
$C^*_r(\pi)$,\footnote{It is known that the natural map
$C^*(\pi)\twoheadrightarrow C^*_r(\pi)$ is an isomorphism if and only
  if $\pi$ is amenable.} the
completion of $\bC\pi$ for its action on $L^2(\pi)$, there is a good
guess for a purely topological calculation of
$K_\bullet(C^*_r(\pi))$.  (Here $K_\bullet$ denotes \emph{topological}
$K$-theory for Banach algebras, which satisfies Bott periodicity.
This is much more closely related to $L$-theory, which is
$4$-periodic, than is algebraic $K$-theory in the sense of Quillen.)
This guess is given by the \emph{Baum-Connes Conjecture}, originally
formulated in \cite{MR1769535,MR928402} and further refined in
\cite{MR1292018}. (See also \cite{MR1648112} for a nice quick survey.)
The conjecture applies to far more than just
discrete groups; it applies to locally compact groups, to such groups
``with coefficients'' (i.e., acting on a $C^*$-algebra), and even to
groupoids \cite{MR1798599}.
In its greatest generality the conjecture is known to be
false \cite{MR1911663}, though a patch which might repair it has been
proposed \cite{2013arXiv1311.2343B}.  However, the original version of
the conjecture is still open, though the literature on the conjecture
has grown to more than 300 items.  To avoid having to talk about
Kasparov's $KK$-theory, we will omit discussion of the conjecture with
coefficients, and will just stick to the original conjecture for
groups.

\begin{boxedminipage}{\boxwidth}
\begin{conj}[Baum-Connes Conjecture]
\label{conj:BC}
Let $G$ be a second countable locally compact group, and let
$C^*_r(G)$ denote the completion of $L^1(G)$ for its action by left
convolution on $L^2(G)$. Then there is a natural assembly map
\[
\mu\co K_\bullet^G(\cE G)\to K_\bullet(C^*_r(G)),
\]
where $\cE G$ is the universal proper $G$-space {\lp}a contractible
space on which $G$ acts properly{\rp}, and this map is an isomorphism.
If $G$ has no nontrivial compact subgroups, then the assembly map
simplifies to
\[
\mu\co K_\bullet(B G)\to K_\bullet(C^*_r(G)).
\]
\end{conj}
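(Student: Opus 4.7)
The plan is to realize $\mu$ as an index map within Kasparov's equivariant $KK$-theory and then attempt to invert it via the Dirac--dual-Dirac method. First I would give $\mu$ its $KK$-theoretic description: an element of $K_\bullet^G(\cE G)$ is represented by a $G$-equivariant $K$-homology class $[D] \in KK^G(C_0(X),\bC)$ for some $G$-compact $G$-invariant subspace $X \subseteq \cE G$, and $\mu[D]$ is the analytic index obtained by descent of $D$ to a Fredholm operator over $C^*_r(G)$. Once this framework is in place, the task is to produce a left inverse (for injectivity, i.e.\ the strong Novikov Conjecture) and then a two-sided inverse (for surjectivity).

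Second, I would pursue a dual-Dirac element: a proper $G$-algebra $A$ together with a Dirac class $\alpha \in KK^G(A,\bC)$ and a dual-Dirac class $\beta \in KK^G(\bC,A)$, whose Kasparov product
\[
\gamma = \beta \otimes_A \alpha \in KK^G(\bC,\bC)
\]
restricts to the unit on every compact subgroup of $G$. A standard diagram chase shows that if $\mathrm{j}_r(\gamma) = 1$ in $K_\bullet(C^*_r(G))$, where $\mathrm{j}_r$ is Kasparov's descent to the reduced crossed product, then $\mu$ is an isomorphism. The construction of $\alpha$ is the ``easy'' half: for $A = C_0(Z)$ with $Z$ a proper $G$-space carrying a $G$-invariant spin$^c$ structure, one takes $\alpha$ to be the Dirac operator on $Z$, and this step is essentially geometric analysis.

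Third, the construction of $\beta$ would be tailored to the geometric or coarse structure of $G$. For $G$ acting properly and isometrically on a Hilbert space (the a-T-menable case) I would invoke the Higson--Kasparov infinite-dimensional Bott periodicity to build $\beta$ on $Z = \mathcal{H}$; for $G$ a lattice in a semisimple Lie group I would use $Z = G/K$ and a Poincar\'e-type duality; for hyperbolic groups and certain $\mathrm{CAT}(0)$ groups one must instead pass to Lafforgue's Banach $KK$-theory, since the required representations of $G$ are not unitary. Verifying that the Kasparov product $\beta \otimes_A \alpha$ equals $1$ on compact subgroups reduces to a local index computation at the fixed-point set.

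The principal obstacle, and the reason the conjecture is open in general, is that for groups with Kazhdan's property (T) the element $\gamma$ is genuinely distinct from $1$ in $KK^G(\bC,\bC)$: the Kazhdan projection in $C^*_r(G)$ provides a topological obstruction to writing $\gamma = 1$. For such groups the dual-Dirac method only yields split injectivity of $\mu$ rather than an isomorphism, and the remaining step --- showing that the Kazhdan projection acts trivially after descent on the image of assembly --- is exactly the mechanism that breaks down in the known counterexamples to the conjecture ``with coefficients.'' Making this step work for the original, coefficient-free conjecture above (or, failing that, finding the correct substitute for $\beta$ for property (T) groups, as Lafforgue's $KK^{\mathrm{ban}}$ program attempts) is where I expect essentially all the difficulty to lie.
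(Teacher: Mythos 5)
This item is a \emph{conjecture}, not a theorem: the paper states the Baum--Connes Conjecture as an open problem, explicitly noting that ``the original version of the conjecture is still open.'' The paper does not and cannot supply a proof, so there is nothing in the paper to compare your attempt against. What you have written is, in effect, a survey of the Dirac--dual-Dirac (``$\gamma$-element'') program for attacking the conjecture, and as a survey it is essentially accurate: the $KK$-theoretic description of the assembly map by descent, the roles of $\alpha$, $\beta$, and $\gamma=\beta\otimes_A\alpha$, the fact that the existence of a $\gamma$-element restricting to $1$ on compact subgroups gives split injectivity (hence the strong Novikov Conjecture), the criterion that $j_r(\gamma)=1$ upgrades this to an isomorphism, the Higson--Kasparov construction for a-T-menable groups, and Lafforgue's Banach $KK$-theory for property (T) groups are all correctly placed.

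That said, you should be clear-eyed that this is not, and cannot at present be made into, a proof. The obstacle you name — that for property (T) groups $\gamma\ne 1$ in $KK^G(\bC,\bC)$, so the method yields only split injectivity — is genuine, and your proposed ``remaining step'' of showing that the Kazhdan projection acts trivially on the image of assembly is precisely the step for which no one has a general argument; the known counterexamples to the conjecture with coefficients (Higson--Lafforgue--Skandalis) show that something of this sort must fail in sufficient generality. A secondary gap: proper $G$-spaces need not carry $G$-invariant spin$^c$ structures, so the clean formulation of $\alpha$ as a Dirac operator on $C_0(Z)$ requires Kasparov's Clifford-algebra machinery or passage to a suitable thickening; this is a technical rather than conceptual issue, but it should be acknowledged. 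In short, your writeup is a reasonable account of the best-known attack on an open problem, and you correctly identify where it stops being a proof.
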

\end{boxedminipage} 

\begin{proposition}
\label{prop:BCgivesNC}
Conjecture \ref{conj:BC} implies Conjecture
\ref{conj:Nov}.  
\end{proposition}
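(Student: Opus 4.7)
The plan is to realize higher signatures as Chern characters of $K$-homology classes and to use Baum--Connes to deduce their oriented homotopy invariance. The $K$-homological version of the Hirzebruch $\cL$-class makes this possible: Kasparov's construction attaches to the signature operator $D_M$ on $M$ a class $[D_M]\in K_n(M)$ whose Chern character equals $\cL(M)\cap[M]$ up to harmless powers of $2$. Consequently the higher signature of Definition \ref{def:highersign} is, up to those constants, $\operatorname{ch}(f_*[D_M])\in H_\bullet(B\pi,\bQ)$, and it suffices to show that $f_*[D_M]\in K_n(B\pi)\otimes\bQ$ is an oriented homotopy invariant.

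To exhibit such invariance, I would compose $f_*$ with the analytic assembly landing in $K_\bullet(C^*_r(\pi))$. Following Mishchenko and Kasparov, $\mu(f_*[D_M])$ is the index of the signature operator twisted by the flat $C^*_r(\pi)$-bundle pulled back from $B\pi$, and this index admits a purely algebraic description as the $K$-theory class of a Hilbert $C^*_r(\pi)$-module Poincar\'e duality form built from the $C^*_r(\pi)$-coefficient chain complex of the universal cover. It therefore depends only on chain-level Poincar\'e data, which are preserved by an oriented homotopy equivalence. This is the crucial analytic input: $\mu(f_*[D_M])=\mu((f\circ\varphi)_*[D_{M'}])$ whenever $\varphi\co M'\to M$ is an orientation-preserving homotopy equivalence.

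It remains to promote this equality in $K_\bullet(C^*_r(\pi))$ to an equality in $K_\bullet(B\pi)\otimes\bQ$ (after which we apply $\operatorname{ch}$ and are done). The analytic assembly factors as
\[
K_\bullet(B\pi)\to K_\bullet^\pi(\underline{E}\pi)\xrightarrow{\mu} K_\bullet(C^*_r(\pi)),
\]
and Conjecture \ref{conj:BC} asserts that the second map is an integral isomorphism. For $\pi$ torsion-free, $\underline{E}\pi=E\pi$, so $K_\bullet^\pi(\underline{E}\pi)=K_\bullet(B\pi)$ and the composition is itself an isomorphism, in particular injective. For general $\pi$, the delocalized Chern character rationally identifies $K_\bullet^\pi(\underline{E}\pi)\otimes\bQ$ with a direct sum, indexed by conjugacy classes of finite-order elements of $\pi$, of rational homologies of centralizers; the summand indexed by the identity is $H_\bullet(B\pi,\bQ)\cong K_\bullet(B\pi)\otimes\bQ$, so $K_\bullet(B\pi)\otimes\bQ$ sits as a rational direct summand of $K_\bullet^\pi(\underline{E}\pi)\otimes\bQ$. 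Hence $\mu\co K_\bullet(B\pi)\otimes\bQ\to K_\bullet(C^*_r(\pi))\otimes\bQ$ is injective under Conjecture \ref{conj:BC}.

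The main obstacle is the analytic step: establishing homotopy invariance of the Mishchenko--Fomenko/Kasparov higher signature index in $K_\bullet(C^*_r(\pi))$. This requires the full Hilbert $C^*$-module calculus for elliptic operators and a careful comparison between the analytic and the chain-level incarnations of the twisted signature; it is the conceptual heart of the analytic approach to Novikov and subsumes all the work Lusztig, Mishchenko, and Kasparov did in the pre-Baum--Connes era. The Baum--Connes step is then essentially formal, as the Chern character decomposition on proper $\pi$-spaces cleanly isolates $B\pi$ as a rational summand of $\underline{E}\pi$.
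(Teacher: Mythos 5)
Your proof is correct, but it follows a genuinely different route from the paper's. The paper's argument is purely diagrammatic: it exploits the equivalence, established in Section~\ref{sec:methods} via the surgery exact sequence~\eqref{eq:surgseq}, between Conjecture~\ref{conj:Nov} and rational injectivity of the $L$-theory assembly map $A_\pi$ of~\eqref{eq:assemb}. It then observes that $A_\pi\otimes\bQ$ fits in a commutative square whose other composite is the Baum--Connes assembly $\mu$, using that $\bK^{\text{top}}$ splits rationally as two shifted copies of $\bL(\bZ)$ and that $L$- and $K$-theory of $C^*$-algebras agree rationally; since BC makes the bottom row an isomorphism and the left column is injective (by a spectral sequence argument when $\pi$ has torsion), a diagram chase gives rational injectivity of $A_\pi$. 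You instead bypass the surgery-theoretic reformulation entirely: you identify the higher signature as $\operatorname{ch}(f_*[D_M])$, invoke the Mishchenko--Kasparov theorem that the $C^*$-algebraic higher index $\mu(f_*[D_M])\in K_\bullet(C^*_r(\pi))$ is an oriented homotopy invariant, and then use BC together with the delocalized Chern character to see that $\mu$ restricted to $K_\bullet(B\pi)\otimes\bQ$ is injective. The trade-off is clear: the paper's route needs no analytic homotopy-invariance theorem --- only the formal compatibility of the $L$-theoretic and $C^*$-algebraic assembly maps --- because the hard work was already absorbed into the surgery-theoretic equivalence; your route takes the Mishchenko--Kasparov homotopy invariance as its analytic core, which is a substantial theorem in its own right, but in exchange you never need to mention $L$-theory or the surgery sequence at all. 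For the step that $K_\bullet(B\pi)\otimes\bQ$ injects rationally into $K_\bullet^\pi(\underline{E}\pi)\otimes\bQ$, both arguments say essentially the same thing: the paper does it by a spectral-sequence comparison using $\bZ\hookrightarrow R(\sigma)$ for finite isotropy $\sigma$, while you appeal to the delocalized Chern character decomposition, which is the same fact packaged differently.
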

\begin{proof}
For this we take $G=\pi$ to be discrete and
countable. For simplicity, we also work with the periodic $L$-theory
spectra instead of the connective ones.  (The difference only affects
the bottom of the surgery sequence \eqref{eq:surgseq}.)
If $\pi$ is torsion-free, the domain of $\mu$ is
$K_\bullet(B\pi) = H_\bullet (B\pi; \bK^{\text{top}})$. But after
inverting $2$, $\bK^{\text{top}}$ is just a direct sum of two copies
of $\bL(\bZ)$, one of them shifted in degree by $2$. So if
Conjecture \ref{conj:BC} holds for $\pi$ and $\pi$ is torsion-free, we
have the commuting diagram
\begin{equation}
  \label{eq:assemblychase}
\xymatrix@R-1pc{H_\bullet (B\pi; \bL(\bZ))\otimes \bQ \ar[r]^(.55){A_\pi}
  \ar@{^{(}->}[dd]& 
  L_\bullet(\bZ\pi) \otimes \bQ \ar[d]\\
  & L_\bullet(C^*_r(\pi)) \otimes \bQ \ar[d]^\cong\\
  H_\bullet (B\pi; \bK^{\text{top}})\otimes \bQ \ar[r]_\cong^\mu &
  K_\bullet(C^*_r(\pi)) \otimes \bQ.}
\end{equation}
Diagram \eqref{eq:assemblychase} immediately implies that the
rational $L$-theory assembly map $A_\pi$ (the same map as the
map induced on rational homotopy groups by \eqref{eq:assemb}) is 
injective.

If $\pi$ is not torsion-free, then $\cE\pi$ and $E\pi$ are not the
same,\footnote{In the extreme case where $\pi$ is a torsion group,
  $\cE\pi=\pt$, while if $\pi$ is nontrivial, $E\pi$ is necessarily
  infinite dimensional.} 
but there is always a $\pi$-equivariant map $E\pi\to \cE\pi$.  Thus
we need only replace \eqref{eq:assemblychase}
by the diagram
\begin{equation}
  \label{eq:assemblychase1}
  \xymatrix{H_\bullet (B\pi; \bL(\bZ))\otimes \bQ
    \ar[r]\ar@{^{(}->}[dd] \ar@/^1pc/[rr]^{A_\pi}&
    H_\bullet^\pi(\cE\pi; \bL(\bZ)) \otimes \bQ \ar[r]
    \ar@{^{(}->}[dd]  &   L_\bullet(\bZ\pi) \otimes \bQ \ar[d]\\
  & & L_\bullet(C^*_r(\pi)) \otimes \bQ \ar[d]^\cong\\
  H_\bullet(B\pi;\bK^{\text{top}}) \otimes \bQ \ar[r]^\alpha &
  H_\bullet^\pi(\cE\pi;\bK^{\text{top}}) \otimes \bQ \ar[r]_\cong^\mu &
  K_\bullet(C^*_r(\pi)) \otimes \bQ.}
\end{equation}
Since points in $\cE\pi$ have finite isotropy, and since the $\pi$-map
$\pi \twoheadrightarrow \pi/\sigma$, $\sigma$ a finite subgroup of
$\pi$, induces the map $\bZ\hookrightarrow R(\sigma)$ on equivariant
$K$-homology, a spectral sequence argument shows that
the bottom left map $\alpha$ in
\eqref{eq:assemblychase1} is injective, and so by a diagram chase, $A_\pi$
is injective. \qed
\end{proof}
Thus Conjecture \ref{conj:BC} (for the case of discrete groups)
implies Conjecture \ref{conj:Nov}.  However, Conjecture \ref{conj:BC}
for \emph{non-discrete} groups is also quite interesting and
important.  There are two main reasons for this:
\begin{enumerate}
\item There are ``change of group methods'' that enable one to pass
  from results for a group to results for a closed subgroup.  Many of
  the significant early results on Novikov's Conjecture were proved by
  considering discrete groups $\pi$ that embed in a Lie group (or
  $p$-adic Lie group) and then using these change of group methods to
  pass from the Lie group to the discrete subgroup.
\item The Baum-Connes Conjecture for connected Lie groups (also known as the
  Connes-Kasparov Conjecture) and the same conjecture
  for $p$-adic groups are both quite
  interesting in their own right, and say a lot about representation
  theory. For an introduction to this topic, see \cite{MR1292018,MR1648112}.
  For some of the more significant results, see
  \cite{MR894996,MR1914617,MR1467072,MR2539769}. For recent applications to
  harmonic analysis on reductive groups, see 
  \cite{MR1941993,MR2314099,MR3271238,2015arXiv150504091R}.
\end{enumerate}

Another direction arising out of both the controlled topology and the
analytic approaches to Novikov's Conjecture leads to the so-called
\emph{coarse Baum-Connes Conjecture}
\cite{MR1147350,MR1344138,MR1388312}.  
This conjecture deals with the large-scale geometry of metric  spaces $X$ of
bounded geometry (think of complete Riemannian manifolds with
curvature bounds, or of finitely generated groups with a word-length
metric).  Roughly speaking, the coarse Novikov Conjecture
says that indices of generalized elliptic operators capture all of the
coarse (i.e., ``large-scale'') rational homology of such a space $X$.

\begin{boxedminipage}{\boxwidth}
\begin{conj}[Coarse Baum-Connes and Novikov]
\label{conj:coarseBC}
Let $X$ be a uniformly contractible
locally compact complete metric space of bounded
geometry, in which all metric balls are compact. Let $KX_\bullet(X)$
be the coarse $K$-homology of $X$ {\lp}the direct limit of the
$K$-homologies of successively coarser Rips complexes{\rp} and let
$C^*(X)$ be the $C^*$-algebra of locally compact, finite propagation
operators on $X$.  Then Roe defined a natural assembly map
\begin{equation}
\label{eq:coarseassmb}
\mu\co KX_\bullet(X)\to K_*(C^*(X)).
\end{equation}
The coarse Baum-Connes Conjecture is that $\mu$ is an isomorphism; the
coarse Novikov Conjecture is that $\mu$ is rationally injective.
\end{conj}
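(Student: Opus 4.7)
The plan is to attack the coarse Baum--Connes assembly map by a coarse analogue of the Dirac/dual-Dirac machinery, combined with Mayer--Vietoris in the coarse category. First I would replace $X$ by its system of Rips complexes $P_d(X)$ and rewrite the left-hand side as $KX_\bullet(X) = \varinjlim_d K_\bullet(P_d(X))$; on the right, one filters $C^*(X)$ by ideals supported near coarsely excisive subspaces. Because $X$ is uniformly contractible of bounded geometry, the natural map $X \to P_d(X)$ is a coarse equivalence for $d$ large, and standard density arguments identify the source of $\mu$ with the ``homology side'' of the story.

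The central technical step is a coarse Mayer--Vietoris: if $X = A \cup B$ is a coarsely excisive decomposition (meaning that for every $R$ there exists $S$ with $N_R(A)\cap N_R(B) \subset N_S(A\cap B)$), then both $KX_\bullet$ and $K_\bullet(C^*(-))$ sit in compatible six-term exact sequences, and the assembly map $\mu$ is a morphism of these sequences. Combined with a five-lemma, this reduces $\mu$ being an isomorphism on $X$ to the same statement on pieces that are either bounded (where both sides are trivially computed) or of the form $[0,\infty) \times Y$ for $Y$ smaller in some large-scale sense, which is handled by a coarse suspension/Bott periodicity argument. Inductively, this reduces the conjecture for $X$ to establishing the assembly isomorphism on a single Euclidean model space, where Bott periodicity on $C^*(\bR^n)$ closes the loop.

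The hard part is effecting this induction across \emph{unbounded} scales: the naive excision only yields the conjecture on bounded pieces, and one must produce a global splitting. Here I would try to imitate the Dirac/dual-Dirac strategy, constructing a class $\beta \in K_0(C^*(X))$ (the dual-Dirac) whose pairing with the Dirac class on the homology side equals the identity in the coarse category. The standard route is via Yu's property A, which furnishes, for each $\varepsilon > 0$, a weight system of low propagation producing $\beta$; an alternative route is to equivariantly compactify $X$ (via a Higson-type corona) and extract $\beta$ from a Bott element on the corona. Either way, the core obstacle is to generate the dual-Dirac class from large-scale geometric input alone.

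Consequently the main obstacle --- and the reason Conjecture~\ref{conj:coarseBC} is not known in full generality --- is to isolate the minimal geometric hypothesis on $X$ under which such a $\beta$ exists. Counterexamples constructed from expander sequences show one cannot dispense with \emph{some} such hypothesis, so the plan would not be to prove the bare statement but to establish it under a geometric condition (coarse embeddability into a Hilbert space, finite asymptotic dimension, or finite decomposition complexity) that still captures the examples of interest. Once $\beta$ is in hand, the remaining verification --- that $\beta$ splits $\mu$ rationally, yielding at least the coarse Novikov half --- is a formal consequence of the Kasparov product and the excision already set up.
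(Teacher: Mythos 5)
The item you were asked to prove is a \emph{conjecture}, not a theorem, and the paper supplies no proof of it: it simply states Conjecture~\ref{conj:coarseBC}, points to positive results under additional hypotheses, and explicitly notes that the conjecture ``fails in various situations \ldots\ especially if one drops the bounded geometry assumption.'' So there is no ``paper's own proof'' to compare against, and no proof of the bare statement can exist --- which you yourself concede near the end of your writeup, when you say the plan is not to prove the statement but to establish it under an extra geometric hypothesis. That concession is the right instinct, but it also means what you have written is a survey of attack strategies rather than a proof of the statement at hand, and should not be represented as such.

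As a survey it is roughly right but conflates two genuinely different arguments and elides the points where they actually carry weight. The coarse Mayer--Vietoris plus five-lemma induction you sketch is, in spirit, Yu's proof of coarse Baum--Connes for spaces of \emph{finite asymptotic dimension}; it does not terminate at ``a single Euclidean model'' but at bounded pieces, and the hard content is controlling the induction over scales, which is precisely where finite asymptotic dimension enters. The Dirac/dual-Dirac argument you then invoke is a \emph{separate} proof, Yu's theorem for spaces \emph{coarsely embeddable into Hilbert space}; it does not pass through the Mayer--Vietoris induction at all, and it produces the Bott/Dirac pair directly from the embedding (property A is a sufficient condition for such an embedding, not the mechanism that ``furnishes a weight system'' producing $\beta$). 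Finally, the closing claim that once $\beta$ is in hand ``the remaining verification \ldots\ is a formal consequence of the Kasparov product'' brushes aside the actual technical core --- constructing $\beta$ with the required propagation control and verifying that the composition is the identity in the relevant $E$- or $KK$-group --- which is where all of Yu's work lives. In short: correct intuition about what hypotheses are needed and why, but not a proof, and not what the paper does (the paper does nothing, because there is nothing to do).
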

\end{boxedminipage} 

Positive results on Conjecture \ref{conj:coarseBC} may be found in
\cite{MR1147350,MR1344138,MR1388312,MR3116568,MR3325537,MR2891024,MR3266245,MR3345182}.  

However, it is known that the conjecture fails in various situations
\cite{MR1983785,MR1728880,MR1911663},
especially if one drops the bounded geometry assumption.

The coarse Baum-Connes conjecture implies the Novikov conjecture under
mild conditions.  To see this, suppose for example that there is a
compact metrizable model $Y$ for $B\pi$, and let $X=E\pi$ be its
universal covering. Then there is a commutative diagram
\[
\xymatrix{K_*(B\pi) \ar[r]^\alpha \ar[d]^{\text{tr}}_{\cong}& K_*(C^*_r(\pi))
  \ar[d]^{\text{tr}}\\
\pi_*(\bK X_*(X)^{h\pi}) \ar[r]^{\mu^{h\pi}} & \pi_*(\bK_*(C^*(X))^{h\pi}),}
\]
where $\alpha$ is usual Baum-Connes assembly, $\mu$ is as in
Conjecture \ref{conj:coarseBC}, $h\pi$ denotes homotopy fixed points,
and $\text{tr}$ is a transfer map. Then $\mu$ being an isomorphism implies
that $\mu^{h\pi}$ is an isomorphism, and so we get a splitting for
$\alpha$.  Refinements of this argument, as well as generalizations of
the coarse Baum-Connes conjecture, may be found in \cite{MR2748337}.

Thinking of $C^*_r(\pi)$ as being (up to Morita equivalence) the same
thing as the fixed points of $\pi$ on $C^*(X)$ also gives rise to a
nice way of relating the surgery exact sequence \eqref{conj:coarseBC}
to the Baum-Connes assembly map.  This was accomplished in the series
of papers \cite{MR2220522,MR2220523,MR2220524,2013arXiv1309.4370P},
which set up a natural transformation from the surgery sequence to a
long exact sequence where the $C^*$-algebraic assembly map corresponds
to the $L$-theory assembly map in the original sequence.  This gives
an even more direct connection between coarse Baum-Connes and surgery
theory.

Other ``new directions'' from Novikov's Conjecture arise from
replacing the higher signature of Definition \ref{def:highersign}
with other sorts of ``higher indices.''  For example, an important
case is obtained by replacing $\cL(M)$ with $\wcA(M)$, the total
$\widehat A$ class.  This is again a certain polynomial in the
rational Pontrjagin class, and has the property that when $M$ is a
spin manifold, $\wcA(M)\cap [M]$ is the Chern character of the class
$[D]$ defined by the Dirac operator on $M$.  (Here the reader doesn't
need to know much about the Dirac operator $D$ except for the fact that
it's an elliptic first-order differential operator canonically defined
on a Riemannian manifold with a spin structure.)
It was pointed out by Lichnerowicz \cite{MR0156292} that when $M$
is closed and has positive scalar curvature, then the spectrum of $D$
must be bounded away from $0$, and thus $\inde(D) = \langle \wcA(M),
[M]\rangle$ has to vanish.  When $M$ is not simply connected, a major
strengthening of this is possible:

\begin{boxedminipage}{\boxwidth}  
\begin{conj}[Gromov-Lawson Conjecture \cite{MR720933}]
\label{conj:GL}
Let $M$ be a connected closed spin Riemannian manifold of positive
scalar curvature, let $\pi$ be a discrete
group, and let $f\co M\to B\pi$ be a continuous map {\lp}determined up to
homotopy by a homomorphism $\pi_1(M)\to \pi${\rp}. Then the
higher $\widehat A$-genus $f_*(\wcA(M)\cap [M])\in
H_\bullet(B\pi,\bQ)$ vanishes. 
\end{conj}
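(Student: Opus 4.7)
The plan is to reduce the conjecture to an instance of the strong Novikov/Baum-Connes conjecture via index theory, in close parallel with Proposition \ref{prop:BCgivesNC}. The key observation is that, up to powers of $2$, the higher $\widehat A$-genus $f_*(\wcA(M)\cap [M])$ is the image under the Chern character of the class $f_*[D_M]\in K_\bullet(B\pi)$, where $[D_M]$ denotes the Kasparov $K$-homology class of the spin Dirac operator on $M$. It is therefore enough to show that $f_*[D_M]$ itself is rationally trivial in $K_\bullet(B\pi)$.

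First, I would transport the question into the analytic $K$-theory of $C^*_r(\pi)$. Following Mishchenko-Fomenko and Rosenberg, form the flat bundle of rank-one free Hilbert $C^*_r(\pi)$-modules $\cL_f = \wM\times_\pi C^*_r(\pi)$ over $M$ classified by $f$, and twist the spin Dirac operator by $\cL_f$. By design, the analytic index $\inde(D_M\otimes\cL_f)\in K_\bullet(C^*_r(\pi))$ equals the image $\mu(f_*[D_M])$ of $f_*[D_M]$ under the Baum-Connes assembly map of Conjecture \ref{conj:BC}. Then, exactly as in Lichnerowicz's original argument, flatness of $\cL_f$ kills the twisting curvature term in the Bochner identity, leaving
\[
(D_M\otimes\cL_f)^2 \;=\; \nabla^*\nabla + \tfrac{1}{4}\,\kappa_M\cdot \id,
\]
with $\kappa_M$ the scalar curvature. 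Compactness of $M$ and the hypothesis $\kappa_M>0$ furnish an $\varepsilon>0$ with $\kappa_M\ge\varepsilon$ pointwise, so $(D_M\otimes\cL_f)^2\ge \varepsilon/4$ as an operator on the $C^*_r(\pi)$-Hilbert module of $L^2$ sections; thus $D_M\otimes\cL_f$ is invertible and $\mu(f_*[D_M])=0$ in $K_\bullet(C^*_r(\pi))$.

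To conclude, I would invoke rational injectivity of $\mu$, using the diagram chase already carried out in the proof of Proposition \ref{prop:BCgivesNC}. Under the strong Novikov conjecture for $\pi$ (and in particular under Conjecture \ref{conj:BC}), this yields $f_*[D_M]\otimes \bQ = 0$ in $K_\bullet(B\pi)\otimes\bQ$; applying the Chern character then produces the desired vanishing $f_*(\wcA(M)\cap [M])=0$ in $H_\bullet(B\pi,\bQ)$.

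The main obstacle is precisely this last step: rational injectivity of $\mu$ is open for arbitrary discrete $\pi$, so the program above establishes Conjecture \ref{conj:GL} only modulo the strong Novikov conjecture for $\pi$. For classes of groups where rational Baum-Connes injectivity (or, more economically, the coarse Novikov condition of Conjecture \ref{conj:coarseBC}) is available --- amenable groups, hyperbolic groups, groups with the Haagerup property, cocompact lattices in Lie groups, and so on --- the argument becomes unconditional; in full generality, Conjecture \ref{conj:GL} is at least as hard as the analytic Novikov conjecture itself, and any genuinely new attack would have to bypass the Dirac-assembly reduction entirely.
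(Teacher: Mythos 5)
Conjecture \ref{conj:GL} is stated in the paper as an \emph{open} conjecture, and the paper gives no proof of it; it only records (citing \cite{MR720934}) that the conjecture holds whenever the $K$-theory assembly map $K_\bullet(B\pi)\to K_\bullet(C^*_r(\pi))$ is rationally injective, and hence whenever Conjecture \ref{conj:BC} or suitable cases of Conjecture \ref{conj:coarseBC} hold. Your proposal is precisely that reduction, carried out in the standard way: twist the spin Dirac operator by the Mishchenko--Fomenko flat $C^*_r(\pi)$-bundle $\cL_f=\wM\times_\pi C^*_r(\pi)$, use the Bochner--Lichnerowicz identity and $\kappa_M\ge\varepsilon>0$ to make $D_M\otimes\cL_f$ invertible so that $\mu(f_*[D_M])=0$ in $K_\bullet(C^*_r(\pi))$, and then invoke rational injectivity of $\mu$ together with the Chern character isomorphism $K_\bullet(B\pi)\otimes\bQ\cong H_\bullet(B\pi,\bQ)$ to conclude $f_*(\wcA(M)\cap[M])=0$. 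You are explicit, and correct, that this only yields the statement modulo the strong Novikov conjecture for $\pi$; that is exactly the state of the art reported in the paper, so your proposal matches the paper's discussion while supplying the details the paper delegates to \cite{MR720934}.
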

\end{boxedminipage} 

This conjecture is still open in general, but it is known to be
closely related to Novikov's Conjecture. For example, it was shown in
\cite{MR720934} that Conjecture \ref{conj:GL} is true whenever the
$K$-theory assembly map $K_\bullet(B\pi)\to K_\bullet(C^*_r(\pi))$ is
rationally injective, and thus \emph{a fortiori} whenever Conjecture
\ref{conj:BC} holds.  It also can be deduced from certain cases of
Conjecture \ref{conj:coarseBC}, by a descent argument similar to the
one above.  The Lichnerowicz argument also  applies to complete noncompact
spin manifolds $M$ of \emph{uniformly} positive scalar curvature, and when
Conjecture \ref{conj:coarseBC} holds, one gets obstructions to existence of
such metrics living in $K_\bullet(C^*(X))$ whenever there is a coarse
map $M\to X$.

Conjecture \ref{conj:GL} can be refined to conjectures about
necessary and sufficient conditions for positive scalar curvature.
Here we just mention a few of several possible versions.  For these it's
necessary to go beyond ordinary homology and to consider
$KO$-homology, the homology theory dual to the (topological)
$K$-theory of real vector bundles.  This theory is $8$-periodic
and has coefficient groups $KO_j = \bZ$ when $j$ is divisible by $4$
(this part is detected by the Chern character to ordinary homology),
$\bZ/2$ when $j\equiv 1,2 \pmod 8$, $0$ otherwise. The class $[D]$ of
the Dirac operator on a spin manifold $M$ lives in $KO_n(M)$, $n=\dim
(M)$. While the actual operator $D$ depends on a choice of a Riemannian
metric, the class $[D]\in KO_n(M)$ does not, so that the following
conjecture makes sense.

\begin{boxedminipage}{\boxwidth}
\begin{conj}[Gromov-Lawson-Rosenberg Conjecture]
\label{conj:GLR}
Let $M$ be a connected closed spin manifold with fundamental group
$\pi$ and Dirac operator $D_M$,
and let $f\co M\to B\pi$ be the classifying map for the
universal cover. Let $A\co KO_\bullet(B\pi)\to KO_\bullet(C^*_r(\pi))$
be the assembly map in real $K$-theory.  Then $M$ admits a Riemannian
metric of positive scalar curvature if and only if
$A\circ f_*([D_M])= 0 $ in $KO_n(C^*_r(\pi))$, $n=\dim M\ge 5$.
\end{conj}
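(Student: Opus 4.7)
The plan is to treat the two implications separately, since they rest on very different techniques. For the ``only if'' direction, the idea is to upgrade Lichnerowicz's classical argument to a $C^*$-algebraic one. Form the flat Mishchenko bundle $\mathcal{V} = \widetilde{M}\times_\pi C^*_r(\pi)$ of finitely generated projective right $C^*_r(\pi)$-modules over $M$, and couple $D_M$ to its canonical flat connection to obtain a $C^*_r(\pi)$-linear elliptic operator $D_\mathcal{V}$. By construction, its Mishchenko--Fomenko--Kasparov index in $KO_n(C^*_r(\pi))$ equals $A\circ f_*([D_M])$. Since $\mathcal{V}$ is flat, the Lichnerowicz--Weitzenb\"ock identity still reads $D_\mathcal{V}^2 = \nabla^*\nabla + \kappa/4$, where $\kappa$ is the scalar curvature. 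If $M$ carries a psc metric, then $\inf\kappa > 0$ by compactness, so $D_\mathcal{V}^2$ is bounded below by a positive multiple of the identity, $D_\mathcal{V}$ is invertible as a module map, and the index vanishes.

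For the ``if'' direction I would reduce to bordism and then build metrics geometrically. By the Gromov--Lawson--Schoen--Yau surgery theorem, positive scalar curvature is preserved under surgeries of codimension at least three, and in dimensions $\ge 5$ any two $(B\pi)$-spin-bordant manifolds can be linked by a sequence of such surgeries. Hence the existence of a psc metric on $M$ depends only on the class $[M,f] \in \Omega_n^{\mathit{spin}}(B\pi)$. Under the Atiyah--Bott--Shapiro $ko$-orientation $\alpha\co \Omega_n^{\mathit{spin}}(B\pi)\to ko_n(B\pi)\to KO_n(B\pi)$, this class maps to $f_*([D_M])$, so the hypothesis becomes $[M,f]\in \ker(A\circ\alpha)$ and it suffices to exhibit a psc representative in every such class. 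Following Stolz's strategy, I would first dispose of $\ker\alpha$ using a generating set of spin-bordism classes that manifestly carry psc --- for instance, total spaces of $\bH\bP^2$-bundles, which takes care of the simply connected case $\pi = 1$ --- and then attack the residual contribution $\ker(A\circ\alpha)/\ker\alpha \subseteq KO_n(B\pi)$ via $\pi$-equivariant constructions, realizing classes in the kernel by free $\pi$-actions on psc manifolds and descending.

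The main obstacle is this last, equivariant step. It requires enough injectivity of the assembly map $A$ --- integrally, not merely rationally --- together with a rich enough catalog of psc-constructions to exhaust $\ker(A\circ\alpha)$, and neither input is known in general. A realistic implementation would presumably restrict to groups where the real Baum--Connes conjecture (Conjecture \ref{conj:BC}) is known and where one has a good supply of equivariant psc-manifolds, such as finite groups with periodic cohomology or torsion-free Gromov-hyperbolic groups; for arbitrary $\pi$ one should expect to need additional hypotheses, or to weaken the conclusion to a stable psc statement in which one is allowed to take products with a fixed high-dimensional psc manifold.
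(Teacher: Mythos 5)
This statement is a \emph{conjecture}, and the paper gives no proof of it; more to the point, the paper explicitly records that Conjecture~\ref{conj:GLR} is \emph{false} in general, citing the counterexamples of \cite{MR1778107,MR1632971}. So your closing paragraph is not merely a prudent hedge --- it is exactly the place where any purported proof must break down, and the paper already tells you so.

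That said, your decomposition into two directions tracks the known state of affairs rather precisely. The ``only if'' direction is a genuine theorem, due to Rosenberg \cite{MR842428}, and your sketch is his argument: twist $D_M$ by the flat Mishchenko bundle $\widetilde M\times_\pi C^*_r(\pi)$, note that flatness keeps the Weitzenb\"ock curvature term equal to $\kappa/4$, and conclude that uniformly positive scalar curvature makes the twisted operator invertible, so its Mishchenko--Fomenko index $A\circ f_*([D_M])\in KO_n(C^*_r(\pi))$ vanishes. The ``if'' direction is where the conjecture fails. Your reduction to bordism via Gromov--Lawson--Schoen--Yau surgery is valid for $n\ge 5$ (and $f$ is automatically a $2$-equivalence since $\pi_2(B\pi)=0$), as is the identification of $f_*([D_M])$ with the Atiyah--Bott--Shapiro image of $[M,f]\in\Omega^{\mathit{spin}}_n(B\pi)$; Stolz's $\bH\bP^2$-bundle theorem does kill $\ker\alpha$ and settles the simply connected case. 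But the residual step --- showing that every spin-bordism class mapping into $\ker A$ is represented by a psc manifold --- is precisely what fails: in Schick's example \cite{MR1632971} the index obstruction $A\circ f_*([D_M])$ vanishes and yet $M$ carries no psc metric, the obstruction being detected by Schoen--Yau minimal-hypersurface techniques to which the Dirac index is blind. Your proposed repair, stabilizing by products with a Bott manifold, is exactly Conjecture~\ref{conj:sGLR}, stated immediately afterwards in the paper; it is consistent with all known counterexamples to Conjecture~\ref{conj:GLR} and, by unpublished work of Stolz, follows from the Baum--Connes Conjecture~\ref{conj:BC}.
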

\end{boxedminipage}

The restriction to $n\ge 5$ is
needed only to use surgery methods to construct a metric of positive
scalar curvature when the obstruction vanishes; it is not needed to
show that there is a genuine 
obstruction to positive scalar curvature when $A \circ f_* ([D_M])\ne
0$, which was proven in \cite{MR842428}. 
For the next conjecture, we need to introduce a choice
of \emph{Bott manifold}, a geometric representative for Bott
periodicity in $KO$-homology. This is a simply connected closed spin manifold
$\text{Bt}^8$ of dimension $8$ with $\langle \wcA(\text{Bt}^8),
[\text{Bt}^8]\rangle =1$. It may be chosen to be Ricci flat.

\begin{boxedminipage}{\boxwidth}
\begin{conj}[Stable Gromov-Lawson-Rosenberg Conjecture]
\label{conj:sGLR}
Let $M$ be a connected closed spin manifold  with fundamental group
$\pi$ and Dirac operator $D_M$,
and let $f\co M\to B\pi$ be the classifying map for the
universal cover. Let $\text{Bt}^8$ be a Bott manifold as above.
Then $M$ {\bfseries stably} admits a Riemannian metric of positive scalar
curvature, in the sense that $M\times \overbrace{\text{Bt}^8\times
  \cdots \times \text{Bt}^8}^k$ admits such a metric for some $k$,
if and only if
$A\circ f_*([D_M])= 0 $ in $KO_n(C^*_r(\pi))$, $n=\dim M$.
\end{conj}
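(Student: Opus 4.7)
The plan is to split the biconditional and handle each direction with a distinct set of tools. The necessity direction is an index-theoretic application of the Lichnerowicz--Weitzenb\"ock formula in the $C^*$-algebraic setting; the sufficiency direction rests on the Gromov--Lawson surgery theorem together with Stolz's theorem on stable positive scalar curvature bordism.

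\emph{Necessity.} Assume $M\times \text{Bt}^{8k}$ carries a metric of positive scalar curvature for some $k\ge 0$. Form the flat Mishchenko bundle $\cL=\widetilde{M\times \text{Bt}^{8k}}\times_\pi C^*_r(\pi)$ associated to $f$ composed with projection to $M$, and consider the Mishchenko--Fomenko Dirac operator $D_\cL$. Because $\cL$ is flat, the Lichnerowicz formula $D_\cL^{\,2}=\nabla^*\nabla+\tfrac{s}{4}$ acquires no extra twisting-curvature term; strict positivity of $s$ therefore forces $D_\cL$ to be invertible, so its $KO$-theoretic Mishchenko--Fomenko index vanishes in $KO_{n+8k}(C^*_r(\pi))$. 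By Rosenberg's real $C^*$-algebraic index theorem this index coincides with $A\circ(f\circ\text{proj})_*([D_{M\times \text{Bt}^{8k}}])$. External multiplicativity of the Dirac class, combined with $[D_{\text{Bt}^{8k}}]$ being a unit (the $k$-fold Bott class) in $KO_*$, then yields $A\circ f_*([D_M])=0$.

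\emph{Sufficiency.} Assume $A\circ f_*([D_M])=0$. I would proceed in four steps. First, by the Gromov--Lawson (and Schoen--Yau) surgery theorem, positive scalar curvature is preserved under surgeries of codimension $\ge 3$, so for $n\ge 5$ the property ``$(M,f)$ admits a psc metric'' depends only on the class $[(M,f)]\in \Omega_n^{\text{spin}}(B\pi)$. Second, external product with the Bott manifold corresponds to multiplication by $[\text{Bt}^8]$; inverting this element produces the Bott-localized bordism $\Omega_*^{\text{spin}}(B\pi)[\beta^{-1}]$, and the Atiyah--Bott--Shapiro orientation induces a natural isomorphism $\Omega_*^{\text{spin}}(B\pi)[\beta^{-1}]\xrightarrow{\cong} KO_*(B\pi)$ sending $[(M,f)]$ to $f_*([D_M])$. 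Third, by construction of the $C^*$-theoretic assembly map, composing with $A\co KO_*(B\pi)\to KO_*(C^*_r(\pi))$ recovers the Rosenberg index. Fourth, invoke Stolz's theorem: the ideal of stably psc-representable classes in $\Omega_*^{\text{spin}}(B\pi)[\beta^{-1}]$ coincides with the kernel of the assembly map $A$. Thus $[(M,f)]$ lies in this ideal, so $M\times \text{Bt}^{8k}$ admits a psc metric for some $k$.

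\emph{Main obstacle.} The decisive ingredient is Stolz's theorem at the end of the sufficiency argument: given a class in $KO_n(B\pi)$ annihilated by assembly, one must realize it as a difference of stably psc spin manifolds. For $\pi=1$ this is Stolz's original theorem, proved by exhibiting explicit $\bH P^2$-bundle generators of $ko_*$ together with a delicate psc-bordism construction for each generator. For nontrivial $\pi$ one requires a relative version, which reduces via the Atiyah--Hirzebruch spectral sequence for $ko_*(B\pi)$ to controlling the classes whose only obstruction to psc-representability is captured by the assembly image in $KO_*(C^*_r(\pi))$. All the serious geometry and analysis of surgery-stable psc metrics is concentrated in this step, rather than in the comparatively formal index-theoretic direction.
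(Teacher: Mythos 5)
This statement is a \emph{conjecture} in the paper, not a theorem; the paper does not supply a proof, and indeed no unconditional proof is known. The paper records only the following partial results: the conjecture was formulated and proved for \emph{finite} $\pi$ in \cite{MR1321004}, and Stolz showed (unpublished) that it \emph{follows from} the Baum--Connes Conjecture \ref{conj:BC}. Your proposal presents the full statement as a theorem, which misrepresents its status.

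Examining the two directions separately: your \emph{necessity} direction is essentially correct and is in fact a genuine theorem (due to Rosenberg, \cite{MR842428}) --- the Mishchenko--Fomenko index of the Dirac operator twisted by the flat Mishchenko bundle obstructs positive scalar curvature, and Bott multiplicativity descends the vanishing from $M\times \text{Bt}^{8k}$ to $M$. The gap lies entirely in the \emph{sufficiency} direction, and it is twofold. First, Step 2's claim that the Atiyah--Bott--Shapiro orientation induces an isomorphism $\Omega_*^{\text{spin}}(B\pi)[\beta^{-1}]\cong KO_*(B\pi)$ is false; even for $B\pi=\pt$, the rationalized source has a polynomial generator in every degree divisible by $4$ after inverting Bott, whereas $KO_*\otimes\bQ$ is a Laurent ring on a single generator. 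The ABS map $\Omega_*^{\text{spin}}(X)\to ko_*(X)$ is surjective but has a nontrivial kernel, and controlling that kernel (Stolz's $\bH\bP^2$-bundle transfer argument) is exactly where the geometry lives. Second, and more seriously, Step 4's invocation of ``Stolz's theorem'' --- that the ideal of stably psc-representable classes coincides with the kernel of the assembly map --- is precisely the content of Conjecture \ref{conj:sGLR}. No such theorem exists for general $\pi$; what Stolz proved is that this holds \emph{if} the Baum--Connes assembly map $\mu$ is injective. Your ``Main obstacle'' paragraph correctly identifies this as the crux, but identifying the obstacle is not the same as overcoming it: as written, the argument is circular. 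A corrected version would state the sufficiency direction as a conditional implication (Baum--Connes for $\pi$ implies stable GLR for $\pi$), which is the theorem the paper actually attributes to Stolz.
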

\end{boxedminipage} 

There are simple implications
\[
\text{Conj. \ref{conj:GLR}} \Rightarrow
\text{Conj. \ref{conj:sGLR}},\quad
\text{Conj. \ref{conj:sGLR}} + \text{injectivity of }A\Rightarrow
\text{Conj. \ref{conj:GL}}.
\]
The (very strong) Conjecture \ref{conj:GLR} is known to hold for
especially nice groups, such as free abelian groups \cite{MR842428},
hyperbolic groups of low dimension \cite{MR1778107},
and finite groups with periodic cohomology \cite{MR1484887}, but it fails
in general \cite{MR1778107,MR1632971}. Conjecture  \ref{conj:sGLR} is
weaker, and holds for all the known counterexamples to Conjecture
\ref{conj:GLR}. It was formulated and proven for finite groups in
\cite{MR1321004}. Subsequently, Stolz [unpublished] showed that it
follows from the Baum-Connes Conjecture, Conjecture \ref{conj:BC}.
For a survey on this entire field, see \cite{MR2408269}.

The last ``new direction'' we would like to discuss here comes from
replacing the higher signature in Novikov's Conjecture by the higher
Todd genus or the higher elliptic genus.  This seems to be quite
relevant for understanding the interaction between topological
invariants and algebraic geometry invariants for algebraic varieties
defined over $\bC$.

The Todd class $\cT(M)$ is still another polynomial in characteristic
classes, this time the rational Chern classes of a complex (or almost
complex) manifold. Suppose for simplicity that $M$ is a smooth
projective variety over $\bC$, viewed as a complex manifold via an
embedding into some complex projective space. The Hirzebruch
Riemann-Roch Theorem then says that
\begin{equation}
\label{eq:Hirz}  
\langle \cT(M), [M]\rangle = \chi(M, \cO_M)=
\sum_{j=0}^n  (-1)^j \dim H^j(M, \cO_M),
\end{equation}
where $\cO_M$ is the structure sheaf of $M$, the sheaf of germs of
holomorphic functions, and $n$ is the complex dimension of $M$.
The right-hand side of \eqref{eq:Hirz} is
called the \emph{arithmetic genus}.  (The original definition of the
latter by algebraic geometers like Severi
turned out to be $(-1)^n(\chi(M,\cO_M)-1)$, but the
normalization here is a bit more convenient.)  The left-hand side of
\eqref{eq:Hirz} is called the \emph{Todd genus}, and is known
to be a birational invariant.\footnote{Recall that two varieties are
  said to be birationally equivalent if there are rational maps
  between them which are inverses of each.  Since rational maps do not
  have to be everywhere defined (this is why we denote rational maps
  below by dotted lines), two varieties are birationally equivalent if
  and only if they have Zariski-open subsets which are isomorphic as
  varieties.}
Once again, if one
has a map $f\co M\to B\pi$, then we can define the associated
\emph{higher Todd genus} as $f_*(\cT(M)\cap [M])\in H_\bullet(B\pi,
\bQ)$.

\begin{boxedminipage}{\boxwidth}
\begin{conj}[Algebraic Geometry Novikov Conjecture \cite{MR2342008}]
\label{conj:higherTodd}
Let $M$ be a smooth complex projective variety, and let $f\co M\to
B\pi$ be a continuous map {\lp}for the topology of $M$ as a complex
manifold{\rp}. Let $\xymatrix{M' \ar@{.>}[r]^\varphi & M}$ be a birational
map.  Then the corresponding higher Todd genera agree, i.e.,
\[
f_*(\cT(M)\cap [M]) = (f\circ \varphi)_*(\cT(M')\cap [M'])\in
H_\bullet(B\pi, \bQ).
\]
\end{conj}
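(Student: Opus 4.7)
The plan is to reduce the statement to a push-forward identity for Todd homology classes under proper birational morphisms, and then invoke the Grothendieck-Riemann-Roch theorem. By Hironaka's resolution of indeterminacy, there exists a smooth projective variety $\widetilde{M}$ together with a proper birational morphism $q \colon \widetilde{M} \to M'$ such that the composition $p := \varphi \circ q \colon \widetilde{M} \to M$ is a genuine morphism. Then $f \circ p \colon \widetilde{M} \to B\pi$ is a continuous map representing the rational map $(f \circ \varphi) \circ q$, and the conjecture will follow from the single identity
\[
\pi_*\bigl(\cT(X) \cap [X]\bigr) = \cT(Y) \cap [Y] \in H_\bullet(Y,\bQ)
\]
valid for any proper birational morphism $\pi \colon X \to Y$ of smooth projective varieties over $\bC$. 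Indeed, applying this with $\pi = p$ gives $f_*(\cT(M) \cap [M]) = (f \circ p)_*(\cT(\widetilde{M}) \cap [\widetilde{M}])$, while applying it with $\pi = q$ gives $(f \circ \varphi)_*(\cT(M') \cap [M']) = (f \circ \varphi)_* q_*(\cT(\widetilde{M}) \cap [\widetilde{M}]) = (f \circ p)_*(\cT(\widetilde{M}) \cap [\widetilde{M}])$, which agrees with the former.

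The identity itself drops out of Grothendieck-Riemann-Roch applied to $\pi$ with coefficient sheaf $\cO_X$:
\[
\pi_*\bigl(\cT(X) \cap [X]\bigr) = \operatorname{ch}(R\pi_*\cO_X) \cdot \cT(Y) \cap [Y].
\]
Since smooth varieties have rational singularities, $R^i\pi_*\cO_X = 0$ for $i > 0$ and $\pi_*\cO_X = \cO_Y$, whence $\operatorname{ch}(R\pi_*\cO_X) = 1$ and the claim follows.

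The main obstacle is not the GRR calculation but the bookkeeping that justifies interpreting $(f \circ \varphi)_*(\cT(M') \cap [M'])$ as $(f \circ p)_*(\cT(\widetilde{M}) \cap [\widetilde{M}])$ independently of the chosen resolution $\widetilde{M}$. The standard remedy is the weak factorization theorem of Abramovich-Karu-Matsuki-W\l{}odarczyk: any two such roofs are dominated by a common refinement built from blow-ups and blow-downs along smooth centers, and the GRR argument above shows each intermediate push-forward preserves the Todd homology class. A secondary but essential ingredient is the vanishing $R^i\pi_*\cO_X = 0$ for proper birational maps between smooth projective varieties; although classical (a consequence of Grauert-Riemenschneider or of the fact that smooth varieties trivially have rational singularities), it is the step that causes the GRR formula to collapse to the desired identity. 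An alternative, more conceptual approach would deduce the full invariance in one stroke from the Baum-Fulton-MacPherson functorial Todd class $\tau$, which satisfies $\tau(X) = \cT(X) \cap [X]$ for smooth $X$ and is natural under proper push-forward, bypassing the resolution bookkeeping entirely.
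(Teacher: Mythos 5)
Your proof is correct, but it takes a route that is genuinely different from the one sketched in the paper, so a comparison is warranted. The paper interprets $\varphi_*$ by invoking the weak factorization theorem of Abramovich--Karu--Matsuki--W{\l}odarczyk to break the birational map $\varphi$ into a chain of blow-ups and blow-downs along smooth centers, and then checks at each elementary step that the $K$-homology class $[D_M]\in K_0(M)$ of the Dolbeault operator is preserved, using the Baum--Fulton--MacPherson Riemann--Roch theorem; the statement about higher Todd genera follows by taking Chern characters. You instead take a single Hironaka roof $\widetilde{M}\to M', M$ resolving the indeterminacy of $\varphi$, and establish once and for all the push-forward identity $\pi_*(\cT(X)\cap[X])=\cT(Y)\cap[Y]$ for an arbitrary proper birational morphism $\pi$ of smooth projective varieties, by combining the classical Grothendieck--Riemann--Roch formula with the vanishing $R\pi_*\cO_X\simeq\cO_Y$ (smooth varieties have rational singularities). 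Applying this to both legs of the roof gives the result without any factorization. Your approach buys a shorter argument that avoids weak factorization entirely and works directly in $H_\bullet(\cdot,\bQ)$, at the cost of proving only the rational-homology statement; the paper's $K$-homology formulation $\varphi_*[D_{M'}]=[D_M]$ is a strictly finer invariant and is the one needed for the integral refinements alluded to elsewhere in the text. Two small remarks: first, your appeal to weak factorization to justify well-definedness of $(f\circ\varphi)_*$ is actually superfluous once you have the push-forward identity for all proper birational morphisms --- the roof argument alone suffices, together with the observation that the indeterminacy locus of $\varphi$ has codimension at least two in the smooth $M'$, so $\pi_1$ is unaffected and $f\circ\varphi$ extends to a genuine continuous map $M'\to B\pi$ uniquely up to homotopy by asphericity of $B\pi$. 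Second, your closing remark that the Baum--Fulton--MacPherson functorial Todd class ``bypasses the resolution bookkeeping'' is essentially the route the paper takes; note, however, that even there the functoriality of $\tau$ only tells you $\pi_*\tau_X(\cO_X)=\tau_Y(R\pi_*\cO_X)$, so the vanishing of the higher direct images of $\cO_X$ is still the step that collapses the formula to the desired identity.
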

\end{boxedminipage}

Note the obvious similarity with Conjecture \ref{conj:Nov}.  However,
unlike Novikov's original conjecture, this statement is actually a
\emph{theorem} \cite{MR2282420,MR2646988}.  That follows from the fact
that if $\xymatrix{M' \ar@{.>}[r]^\varphi & M}$ is a birational
map, then $\varphi_*([D_{M'}])=[D_M] \in K_0(M)$, where $[D_M]$ denotes the
$K$-homology class of the Dolbeault operator, whose Chern character is
$\cT(M)\cap [M]$.\footnote{It takes a bit of work to make sense of
$\varphi_*$ here, since $\varphi$ may not be everywhere defined, but this can be
  done. The point is that by the factorization theorem for birational
  maps \cite{MR1896232}, we can factor $\varphi$ into a sequence of blow-ups
and blow-downs, and $\varphi_*$ is clearly defined for a blow-down (since
it is a continuous map) and is an isomorphism in this case
by the Baum-Fulton-MacPherson variant of
Grothendieck-Riemann-Roch \cite{MR549773}. In the case of a blow-up,
let $\varphi_*$ be given by the inverse of the map induced by the reverse
blow-down.}  The corresponding 
statement for the signature operator is 
\emph{not} true; a homotopy equivalence does not have to preserve the
class of the signature operator. (However, the mod $8$ reduction of
this class \emph{is} preserved \cite{MR2170494}.)

However, there is another similarity with Novikov's Conjecture  which
is pointed out in \cite{MR2342008}.  By \cite[Th\'eor\`eme
  IV.17]{MR0061823}, $\Omega_\bullet$, the graded ring of cobordism
classes of oriented manifolds, is, after tensoring with $\bQ$, a
polynomial ring in the classes of the complex projective spaces
$\bC\bP^{2k}$, $k\in \bN$.  Then if $I_\bullet$ is the ideal in
$\Omega_\bullet$ generated by all $[M]-[M']$ with $M$ and $M'$
homotopy equivalent (in a way preserving orientation), Kahn
\cite{MR0172306} proved that $\Omega_\bullet/I_\bullet \cong \bQ$,
with the quotient map identified with the Hirzebruch signature.
Similarly, $\Omega^U_\bullet$, the graded ring of cobordism
classes of almost complex manifolds, is, after tensoring with $\bQ$, a
polynomial ring in the classes of all complex projective spaces, and
the quotient of $\Omega^U_\bullet$
by the ideal generated by all $[M]-[M']$ with $M$ and
$M'$ birationally equivalent smooth projective varieties is again
$\bQ$, this time with the quotient map identifiable with the Todd genus.

These results effectively say that, up to multiples, the signature is
the only homotopy-invariant genus on oriented manifolds, and the
arithmetic genus is the only birationally invariant genus on smooth
projective varieties.  But if one considers manifolds with large
fundamental group, the situation changes. By \cite[Theorem
  4.1]{MR2342008}, a linear functional on $\Omega_\bullet(B\pi)\otimes \bQ$
that is an oriented homotopy invariant must come from the higher
signature, and by \cite[Theorem 4.3]{MR2342008}, a linear functional
on $\Omega^U_\bullet(B\pi)\otimes \bQ$ that is a birational
invariant must (under a certain technical condition satisfied in many
cases) come from the higher Todd genus.

Finally, the papers \cite{MR1400287,MR2407227,MR2545873} consider still more
analogues of higher genera with the Todd genus replaced by the
elliptic genus.  The result of \cite{MR2407227} is particularly nice;
it is the exact analogue of Conjecture \ref{conj:higherTodd}, but with
the Todd genus replaced by the elliptic genus and with birational
equivalence replaced by $K$-equivalence (a birational equivalence
preserving canonical bundles).

\bibliographystyle{spmpsci} \bibliography{NovConj} 
\end{document}